\newtheorem{thm}{Theorem}[section]
\newtheorem{lem}[thm]{Lemma}
\newtheorem{rem}[thm]{Remark}
\theoremstyle{definition}
\newcommand{\scr}[1]{\mathscr #1}
\definecolor{wco}{rgb}{0.5,0.2,0.3}
\numberwithin{equation}{section} \theoremstyle{remark}
\newcommand{\ua}{\uparrow}
\newcommand{\da}{\downarrow}
\title{{\bf Regularities for distribution dependent SDEs with fractional noises}\footnote{
X. Fan is supported in part by the Natural Science Foundation of Anhui Province (No.
2008085MA10).
X. Huang is supported in part by the National Natural Science Foundation of China (No. 12271398).} }
\author{
{\bf   Xiliang Fan$^{a)}$,  Xing Huang$^{b)}$, Zewei Ling$^{a)}$,   }\\
 \footnotesize{ a)School of Mathematics and Statistics, Anhui Normal University, Wuhu 241002, China}\\
\footnotesize{ b)Center for Applied Mathematics, Tianjin
University, Tianjin 300072, China}\\
\footnotesize{fanxiliang0515@163.com,\ \ xinghuang@tju.edu.cn,\ \ ling122099@163.com}}
\begin{document}
\allowdisplaybreaks
\def\R{\mathbb R}  \def\ff{\frac} \def\ss{\sqrt} \def\B{\mathbf
B} \def\W{\mathbb W}
\def\N{\mathbb N} \def\kk{\kappa} \def\m{{\bf m}}
\def\ee{\varepsilon}\def\ddd{D^*}
\def\dd{\delta} \def\DD{\Delta} \def\vv{\varepsilon} \def\rr{\rho}
\def\<{\langle} \def\>{\rangle} \def\GG{\Gamma} \def\gg{\gamma}
  \def\nn{\nabla} \def\pp{\partial} \def\E{\mathbb E}
\def\d{\text{\rm{d}}} \def\bb{\beta} \def\aa{\alpha} \def\D{\scr D}
  \def\si{\sigma} \def\ess{\text{\rm{ess}}}
\def\beg{\begin} \def\beq{\begin{equation}}  \def\F{\scr F}
\def\Ric{\text{\rm{Ric}}} \def\Hess{\text{\rm{Hess}}}
\def\e{\text{\rm{e}}} \def\ua{\underline a} \def\OO{\Omega}  \def\oo{\omega}
 \def\tt{\tilde} \def\Ric{\text{\rm{Ric}}}
\def\cut{\text{\rm{cut}}} \def\P{\mathbb P} \def\ifn{I_n(f^{\bigotimes n})}
\def\C{\scr C}      \def\aaa{\mathbf{r}}     \def\r{r}
\def\gap{\text{\rm{gap}}} \def\prr{\pi_{{\bf m},\varrho}}  \def\r{\mathbf r}
\def\Z{\mathbb Z} \def\vrr{\varrho} \def\ll{\lambda}
\def\L{\scr L}\def\Tt{\tt} \def\TT{\tt}\def\II{\mathbb I}
\def\i{{\rm in}}\def\Sect{{\rm Sect}}  \def\H{\mathbb H}
\def\M{\scr M}\def\Q{\mathbb Q} \def\texto{\text{o}} \def\LL{\Lambda}
\def\Rank{{\rm Rank}} \def\B{\scr B} \def\i{{\rm i}} \def\HR{\hat{\R}^d}
\def\to{\rightarrow}\def\l{\ell}\def\iint{\int}
\def\EE{\scr E}\def\Cut{{\rm Cut}}
\def\A{\scr A} \def\Lip{{\rm Lip}}
\def\BB{\scr B}\def\Ent{{\rm Ent}}\def\L{\scr L}
\def\R{\mathbb R}  \def\ff{\frac} \def\ss{\sqrt} \def\B{\mathbf
B}
\def\N{\mathbb N} \def\kk{\kappa} \def\m{{\bf m}}
\def\dd{\delta} \def\DD{\Delta} \def\vv{\varepsilon} \def\rr{\rho}
\def\<{\langle} \def\>{\rangle} \def\GG{\Gamma} \def\gg{\gamma}
  \def\nn{\nabla} \def\pp{\partial} \def\E{\mathbb E}
\def\d{\text{\rm{d}}} \def\bb{\beta} \def\aa{\alpha} \def\D{\scr D}
  \def\si{\sigma} \def\ess{\text{\rm{ess}}}
\def\beg{\begin} \def\beq{\begin{equation}}  \def\F{\scr F}
\def\Ric{\text{\rm{Ric}}} \def\Hess{\text{\rm{Hess}}}
\def\e{\text{\rm{e}}} \def\ua{\underline a} \def\OO{\Omega}  \def\oo{\omega}
 \def\tt{\tilde} \def\Ric{\text{\rm{Ric}}}
\def\cut{\text{\rm{cut}}} \def\P{\mathbb P} \def\ifn{I_n(f^{\bigotimes n})}
\def\C{\scr C}      \def\aaa{\mathbf{r}}     \def\r{r}
\def\gap{\text{\rm{gap}}} \def\prr{\pi_{{\bf m},\varrho}}  \def\r{\mathbf r}
\def\Z{\mathbb Z} \def\vrr{\varrho} \def\ll{\lambda}
\def\L{\scr L}\def\Tt{\tt} \def\TT{\tt}\def\II{\mathbb I}
\def\i{{\rm in}}\def\Sect{{\rm Sect}}  \def\H{\mathbb H}
\def\M{\scr M}\def\Q{\mathbb Q} \def\texto{\text{o}} \def\LL{\Lambda}
\def\Rank{{\rm Rank}} \def\B{\scr B} \def\i{{\rm i}} \def\HR{\hat{\R}^d}
\def\to{\rightarrow}\def\l{\ell}
\def\8{\infty}\def\I{1}\def\U{\scr U}
\maketitle

\def\ti{\tilde}
\def\la{\lambda}
\def\th{\theta}
\def\sP{\mathscr{P}}
\def\sB{\mathscr {B}}
\def\sL{\mathscr{L}}
\def\de{\delta}
\def\ra{\rightarrow}
\def\sF{\mathscr{F}}
\def\cS{\mathcal{S}}
\def\ve{\varepsilon}
\def\na{\nabla}
\def\be{\beta}
\def\cH{\mathcal{H}}
 \def\sC{\mathscr {C}}

\begin{abstract}
In this paper, we investigate the regularities for a class of distribution dependent SDEs driven by two independent fractional noises $B^H$ and $\ti B^{\ti H}$ with Hurst parameters $H\in(0,1)$ and $\ti H\in(1/2,1)$.
We establish the log-Harnack inequalities and Bismut formulas for the Lions derivative to this type of equations with distribution dependent noise, in
both non-degenerate and degenerate cases.
Our proofs consist of utilizing coupling arguments which are indeed backward couplings introduced by F.-Y. Wang \cite{Wang12b},
together with a careful analysis of fractional derivative operator.

\end{abstract} \noindent
 AMS subject Classification:\ 60H10, 60G22   \\
\noindent
 Keywords: Distribution dependent SDEs; fractional Brownian motion; Harnack inequality; Bismut formula; Lions derivative
 \vskip 2cm

 \section{Introduction}
Let $\sP(\R^d)$ be the space of all probability measures on $\R^d$ equipped with the weak topology. For any $\theta\geq 1$, set $\scr P_\theta(\R^d):=\{\mu\in\sP(\R^d):\mu(|\cdot|^\th)^{1/\th}\}<\infty$.
In this article, we are concerned with a distribution dependent stochastic differential equations (DDSDEs) of the form
\beq\label{In1}
\d X_t= b_t(X_t, \L_{X_t})\d t+\si_t\d B^{H}_t+\ti\si_t(\L_{X_t})\d\ti B^{\ti H}_t,\ \ X_0=\xi,
\end{equation}
where $\sL_{X_t}$ denotes the law of $X_t$, $b:[0,T]\times\R^d\times\scr P(\R^d)\rightarrow\R^d, \si:[0,T]\rightarrow\R^d\otimes\R^d, \ti\si:[0,T]\times\scr P(\R^d)\rightarrow\R^d\otimes\R^d, \xi$ is an $\R^d$-valued random variable,
and $B^H,\ti B^{\ti H}$ are respectively two independent fractional Brownian motions (FBMs) with Hurst parameters
$H\in(0,1)$ and $\ti H\in(1/2,1)$ independent of $\xi$.

The FBM is commonly viewed as the simplest stochastic process modelling time correlated noise.
A $d$-dimensional FBM $(B^H_t)_{t\in[0,T]}=(B^{H,1}_t,\cdots,B^{H,d}_t)_{t\in[0,T]}$ with Hurst parameter $H\in(0,1)$ is a centered, $H$-self similar Gaussian process with the covariance function $\E(B^{H,i}_tB^{H,j}_s)=R_H(t,s)\delta_{i,j}$, where
\beq\label{In3}
 R_H(t,s):=\frac{1}{2}\left(t^{2H}+s^{2H}-|t-s|^{2H}\right),\ \  t,s\in[0,T].
\end{equation}
This implies that the FBM generalizes the standard Wiener process ($H=1/2$) and has stationary increments.
However, the increments are correlated with a power law correlation decay,
which asserts the FBM is a non-Markovian process that is the dominant feature of equation \eqref{In1}.
This means that the techniques based on the It\^{o} calculus are not applicable and then substantial new difficulties will appear in this setting.

DDSDE is also called McKean-Vlasov or mean-field SDE, which was first introduced in the pioneering work \cite{McKean66} to model plasma dynamics.
The importance of DDSDEs is due to their description of limiting behaviours of individual particles which interact with each other in a mean-field sense,
when the number of particles tends to infinity.
Another important feature of DDSDEs is their intrinsic link with nonlinear Fokker-Planck equations that characterize the evolution of the marginal laws of DDSDEs.
For these reasons, DDSDEs appear widely in applications, including fluid dynamics, mean-field games, biology and mathematical finance etc,
and then have received increasing attentions, see \cite{BT97,CD13,JW17,LL07} and the references therein.
Recently, DDSDEs have been applied in \cite{BLPR17,CM18,Li18a,RW21} to study smoothness of associated PDE
which involves the Lions derivative introduced by P.-L. Lions in his lectures \cite{Cardaliaguet13}.
Moreover, Bismut formula for the Lions derivative, Harnack type inequality, gradient estimate and exponential ergodicity have been studied,
see for instance \cite{BRW,RW,Song,Wang18}.

Contrary to the previously mentioned works, here we aim to investigate the regularities of equation \eqref{In1} perturbed by two independent fractional noises.
That is, we study the regularity of the maps
\beg{align*}
\mu\mapsto P_t^*\mu,\ \  t\in[0,T],
\end{align*}
where $P_t^*\mu:=\sL_{X_t}$ for $X_t$ solving \eqref{In1} with initial distribution $\sL_{X_0}=\mu\in\sP_p(\R^d)$.
Observe that a probability measure is determined by integrals of $f\in\sB_b(\R^d)$, the collection of all bounded measurable functions on $\R^d$,
it suffices to investigate the regularity of the functionals
\beg{align*}
\mu\mapsto(P_tf)(\mu):=\int_{\R^d}f\d(P_t^*\mu) ,\ \ f\in\sB_b(\R^d), t\in[0,T].
\end{align*}
More precisely, with regards to equation \eqref{In1}, we address the following question:

(Q) Under what conditions does the functional $P_tf$ have dimensional-free Harnack inequalities and Bismut formulas?

Our main reasons for doing so are the following.

(i) As pointed out in \cite{Banos18} which investigated the sensitivity of prices of
options with respect to the initial value of the underlying asset price, the Bismut formula gives a better approximation of the sensitivity.
In addition, the Harnack inequality may imply the gradient estimate and entropy estimate.

(ii) It was shown in our previous work \cite{FHSY} that for distribution-free noise ($\ti\sigma=0$ in equation \eqref{In1}),
Bismut formulas for $P_tf$ are established by using Malliavin calculus.
However, for distribution dependent noise, these formulas are still open due to technical difficulty (see the reason at the beginning of Section 4 in \cite{FHSY}).

In contrast with Brownian motion case, DDSDEs driven by FBM have been much less studied.
In addition to \cite{FHSY} mentioned above, we also established the large and moderate deviation principles for DDSDE driven by a FBM ($\sigma=0$ in equation \eqref{In1}).
See also the article \cite{GHM2} for the well-posedness result to DDSDE driven by a FBM ($\sigma=1,\ti\sigma=0$ in equation \eqref{In1})
with irregular, possibly distributional drift via some stability estimates.
To our best knowledge, none of the questions we ask here for equation \eqref{In1} with distribution-dependent and possibly degenerate noise
have been addressed so far.
It appears that they require a novel set of tools and ideas.
Our strategy in this paper builds on the work of the second author and Wang \cite{HW22}, which handled DDSDEs driven by a standard Brownian motion
\beq\label{In2}
\d X_t= b_t(X_t, \L_{X_t})\d t+\si_t(\L_{X_t})\d B^{\ff 1 2}_t,\ \ t\in[0,T].
\end{equation}
Therein the authors introduced a noise decomposition argument to the equation, which allows to obtain the Harnack inequality, Bismut formula and exponential ergodicity for equation \eqref{In2}.
In this paper, we first show the well-posedness of equation \eqref{In1}.
Then, instead of appealing to Malliavin calculus, we establish the log-Harnack inequalities and Bismut formula for the Lions derivative
to equation \eqref{In1} in  both non-degenerate and degenerate cases,
in which our proofs are based entirely on a combination of coupling argument and a careful analysis of fractional derivative operator.
Let us stress here that in our proofs, the invertible condition imposed on $\sigma$ is essential, and the constructed couplings are indeed backward couplings
which were first introduced in \cite{Wang12b}.

We conclude this introduction with the structure of the paper.
In Section 2, we recall some well-known facts on fractional calculus, FBM and the Lions derivative.
Section 3 contains the well-posedness result of DDSDE driven by FBM.
In Section 4, we state and prove our main results concerning the regularities for DDSDEs with distribution-dependent and possibly degenerate fractional noise.

\section{Preliminaries}

In this section, we recall some basic elements of fractional calculus, fractional Brownian motion and the Lions derivative.

\subsection{Fractional calculus}

Let $a,b\in\R$ with $a<b$.
For $f\in L^1([a,b],\R)$ and $\alpha>0$, the left-sided (respectively right-sided) fractional Riemann-Liouville integral of $f$ of order $\alpha$
on $[a,b]$ is defined as
\beg{align}\label{FrIn}
&I_{a+}^\alpha f(x)=\frac{1}{\Gamma(\alpha)}\int_a^x\frac{f(y)}{(x-y)^{1-\alpha}}\d y\\
&\qquad\left(\mbox{respectively}\ \ I_{b-}^\alpha f(x)=\frac{(-1)^{-\alpha}}{\Gamma(\alpha)}\int_x^b\frac{f(y)}{(y-x)^{1-\alpha}}\d y\right).\nonumber
\end{align}
Here $x\in(a,b)$ a.e., $(-1)^{-\alpha}=\e^{-i\alpha\pi}$ and $\Gamma$ stands for the Gamma function.
In particular, when $\alpha=n\in\N$, they are consistent with the usual $n$-order iterated integrals.

Fractional differentiation can be defined as an inverse operation.
Let $\alpha\in(0,1)$ and $p\geq1$.
If $f\in I_{a+}^\alpha(L^p([a,b],\R))$ (respectively $I_{b-}^\alpha(L^p([a,b],\R)))$, then the function $g$ satisfying $I_{a+}^\alpha g=f$ (respectively $I_{b-}^\alpha g=f$) is unique in $L^p([a,b],\R)$ and it coincides with the left-sided (respectively right-sided) Riemann-Liouville derivative
of $f$ of order $\alpha$ given by
\beg{align*}
&D_{a+}^\alpha f(x)=\frac{1}{\Gamma(1-\alpha)}\frac{\d}{\d x}\int_a^x\frac{f(y)}{(x-y)^\alpha}\d y\\
&\qquad\left(\mbox{respectively}\ D_{b-}^\alpha f(x)=\frac{(-1)^{1+\alpha}}{\Gamma(1-\alpha)}\frac{\d}{\d x}\int_x^b\frac{f(y)}{(y-x)^\alpha}\d y\right).
\end{align*}
The corresponding Weyl representation is of the form
\beg{align}\label{FrDe}
&D_{a+}^\alpha f(x)=\frac{1}{\Gamma(1-\alpha)}\left(\frac{f(x)}{(x-a)^\alpha}+\alpha\int_a^x\frac{f(x)-f(y)}{(x-y)^{\alpha+1}}\d y\right)\\
&\qquad\left(\mbox{respectively}\ \ D_{b-}^\alpha f(x)=\frac{(-1)^\alpha}{\Gamma(1-\alpha)}\left(\frac{f(x)}{(b-x)^\alpha}+\alpha\int_x^b\frac{f(x)-f(y)}{(y-x)^{\alpha+1}}\d y\right)\right),\nonumber
\end{align}
where the convergence of the integrals at the singularity $y=x$ holds pointwise for almost all $x$ if $p=1$ and in the $L^p$ sense if $p>1$.
For in-depth treatments, we refer the reader to \cite{SK}.

\subsection{Fractional Brownian motion}

Let $(\Omega,\sF,\P)$ be a probability space carrying a $d$-dimensional FBM $B^H$ with Hurst parameter $H\in(0,1)$ on the interval $[0,T]$.
We suppose that there is a sufficiently rich sub-$\si$-algebra $\sF_0\subset\sF$ independent of $B^H$ such that
for any $\mu\in\sP_p(\R^d)$ there exists a random variable $\xi\in L^p(\Omega\ra\R^d,\sF_0,\P)$ with distribution $\mu$.
Let $\{\sF_t\}_{t\in[0,T]}$ be the filtration generated by $B^H$, completed and augmented by $\sF_0$.

Let $\mathscr{E}$ be the set of step functions on $[0,T]$ and $\mathcal {H}$ the Hilbert space defined as the closure of
$\mathscr{E}$ with respect to the scalar product
\beg{align*}
\left\langle (\mathbb I_{[0,t_1]},\cdot\cdot\cdot,\mathbb I_{[0,t_d]}),(\mathbb I_{[0,s_1]},\cdot\cdot\cdot,\mathbb I_{[0,s_d]})\right\rangle_\cH=\sum\limits_{i=1}^dR_H(t_i,s_i).
\end{align*}
Recall here that $R_H(\cdot,\cdot)$ is given in \eqref{In3}.
The mapping $(\mathbb I_{[0,t_1]},\cdot\cdot\cdot,\mathbb I_{[0,t_d]})\mapsto\sum_{i=1}^dB_{t_i}^{H,i}$ can be extended to an isometry between $\cH$ (also called the reproducing kernel Hilbert space) and the Gaussian space $\mathcal {H}_1$ associated with $B^H$.
Denote this isometry by $\psi\mapsto B^H(\psi)$.
Besides, by \cite{DU98} we know that $R_H(t,s)$ has an integral representation of the form
\beg{align*}
 R_H(t,s)=\int_0^{t\wedge s}K_H(t,r)K_H(s,r)\d r,
\end{align*}
where $K_H$ is a square integrable kernel defined by
\beg{align*}
K_H(t,s)=\Gamma\left(H+\frac{1}{2}\right)^{-1}(t-s)^{H-\frac{1}{2}}F\left(H-\frac{1}{2},\frac{1}{2}-H,H+\frac{1}{2},1-\frac{t}{s}\right),
\end{align*}
in which $F(\cdot,\cdot,\cdot,\cdot)$ is the Gauss hypergeometric function.
See, e.g., \cite{DU98,Nikiforov&Uvarov88} for further details.

Next, we define the linear operator $K_H^*:\mathscr{E}\rightarrow L^2([0,T],\R^d)$ as follows
\beg{align*}
(K_H^*\psi)(s)=K_H(T,s)\psi(s)+\int_s^T(\psi(r)-\psi(s))\frac{\partial K_H}{\partial r}(r,s)\d r.
\end{align*}
Owing to \cite{Alos&Mazet&Nualart01a},
the relation $\langle K_H^*\psi,K_H^*\phi\rangle_{L^2([0,T],\R^d)}=\langle\psi,\phi\rangle_\mathcal {H}$ holds for all $\psi,\phi\in\mathscr{E}$,
and then by the bounded linear transform theorem, $K_H^*$ can be extended to an isometry between $\mathcal{H}$ and $L^2([0,T],\R^d)$.
As a consequence, by \cite{Alos&Mazet&Nualart01a} again,
there exists a $d$-dimensional Wiener process $W$ defined on $(\Omega,\sF,\P)$ such that $B^H$ has the following Volterra-type representation
\beg{align}\label{IRFor}
B_t^H=\int_0^tK_H(t,s)\d W_s, \ \ t\in[0,T].
\end{align}
In addition, define the operator $K_H: L^2([0,T],\mathbb{R}^d)\rightarrow I_{0+}^{H+1/2 }(L^2([0,T],\mathbb{R}^d))$ by
\beg{align*}
 (K_H f)(t)=\int_0^tK_H(t,s)f(s)\d s.
\end{align*}
According to \cite{DU98},
we obtain that $K_H$ is an isomorphism and for any $f\in L^2([0,T],\mathbb{R}^d)$,
\begin{equation}\nonumber
(K_H f)(s)=
\left\{
\begin{array}{ll}\vspace{0.3cm}
I_{0+}^{1}s^{H-1/2}I_{0+}^{H-1/2}s^{1/2-H}f,\ \ H\in(1/2,1),\\
I_{0+}^{2H}s^{1/2-H}I_{0+}^{1/2-H}s^{H-1/2}f,\ \ H\in(0,1/2).
\end{array} \right.
\end{equation}
Then for each $h\in I_{0+}^{H+1/2}(L^2([0,T],\R^d))$, the inverse operator $K_H^{-1}$ is of the form
\begin{equation}\label{InOp}
(K_H^{-1}h)(s)=
\left\{
\begin{array}{ll}\vspace{0.3cm}
s^{H-1/2}D_{0+}^{H-1/2}s^{1/2-H}h',\ \ \ \ \ \ \ H\in(1/2,1),\\
s^{1/2-H}D_{0+}^{1/2-H}s^{H-1/2}D_{0+}^{2H}h,\ \ H\in(0,1/2).
\end{array} \right.
\end{equation}
In particular, when $h$ is absolutely continuous, we get
\beg{align}\label{-HOF}
(K_H^{-1}h)(s)=s^{H-\frac{1}{2}}I_{0+}^{\frac{1}{2}-H}s^{\frac{1}{2}-H}h',\ \ H\in(0,1/2).
\end{align}

\subsection{The Lions derivative}

For $p>1$, define the $L^p$-Wasserstein distance on $\sP_p(\R^d)$ as follows
\beg{align*}
\mathbb{W}_p(\mu,\nu):=\inf_{\pi\in\sC(\mu,\nu)}\left(\int_{\R^d\times\R^d}|x-y|^p\pi(\d x, \d y)\right)^\ff 1 p,\ \ \mu,\nu\in\sP_p(\R^d),
\end{align*}
where $\sC(\mu,\nu)$ is the set of all couplings of $\mu$ and $\nu$.
It is well-known that $(\sP_p(\R^d),\mathbb{W}_p)$ is a Polish space.
Throughout this paper, denote $|\cdot|$ and $\<\cdot,\cdot\>$ respectively for the Euclidean norm and inner product,
and for a matrix, denote by $\|\cdot\|$ the operator norm.
$\|\cdot\|_{L^p_\mu}$ stands for the norm of the space $ L^p(\R^d\ra\R^d,\mu)$ and for a random variable $\eta$, $\sL_\eta$ denotes its distribution.

Now, we present the definition of the Lions derivative, see, e.g., \cite{BRW,Cardaliaguet13,{HW21}} for further details.

\beg{defn} Let $p\in(1,\infty)$.
\beg{enumerate}
\item[(1)]  A continuous function $f$ on $\sP_p(\R^d)$ is called intrinsically differentiable, if for any $\mu\in\sP_p(\R^d)$.
\beg{align*}
L^p(\R^d\ra\R^d,\mu)\ni\phi\mapsto D^L_\phi f(\mu):=\lim_{\ve\da0}\ff{f(\mu\circ(\mathrm{Id}+\ve\phi)^{-1})-f(\mu)}\ve\in\R
\end{align*}
is a well defined bounded linear operator.
In this case, the norm of the intrinsic derivative $D^Lf(\mu)$ is given by
\beg{align*}
\|D^Lf(\mu)\|_{L^{p^*}_\mu}=\sup_{\|\phi\|_{L^p_\mu}\leq 1}|D^L_\phi f(\mu)|,
\end{align*}
where $p^*=\ff {p-1}p$.

\item[(2)] $f$ is called $L$-differentiable on $\sP_p(\R^d)$, if it is intrinsically differentiable and
\beg{align*}
\lim_{\|\phi\|_{L^p_\mu}\ra0}\ff{|f(\mu\circ(\mathrm{Id}+\phi)^{-1})-f(\mu)-D^L_\phi f(\mu)|}{\|\phi\|_{L^p_\mu}}=0,\ \ \mu\in\sP_p(\R^d).
\end{align*}
If $f$ is $L$-differentiable on $\sP_p(\R^d)$ such that $D^Lf(\mu)(x)$ has a jointly continuous version in $(\mu,x)\in\sP_p(\R^d)\times\R^d$,
we denote $f\in C^{(1,0)}(\sP_p(\R^d))$.

\item[(3)] $g$ is called differentiable on $\R^d\times\sP_p(\R^d)$, if for any $(x,\mu)\in\R^d\times\sP_p(\R^d)$,
$g(\cdot,\mu)$ is differentiable and $g(x, \cdot)$ is $L$-differentiable.
Moreover, if $D^Lg(x,\cdot)(\mu)(y)$ and $\na g(\cdot,\mu)(x)$ are jointly continuous in $(x,y,\mu)\in\R^d\times\R^d\times\sP_p(\R^d)$,
we denote $g\in C^{1,(1,0)}(\R^d\times\sP_p(\R^d)$.

\end{enumerate}
\end{defn}

For a vector-valued function $f=(f_i)$ or a matrix-valued function $f=(f_{ij})$ with $L$-differentiable components, we simply write
\beg{align*}
D^Lf(\mu)=(D^Lf_i(\mu))  \ \  \mathrm{or}\ \ D^Lf(\mu)=(D^Lf_{ij}(\mu)).
\end{align*}
Let us finish this part by giving a formula for the $L$-derivative that are needed later on.

\beg{lem}\label{FoLD}(\cite[Theorem 2.1]{BRW})
Let $(\Omega,\sF,\P)$ be an atomless probability space and $\xi,\eta\in L^p(\Omega\ra\R^d,\P)$ with $p\in(1,\infty)$.
If $f\in C^{(1,0)}(\sP_p(\R^d))$, then
\beg{align*}
\lim_{\ve\da0}\ff {f(\sL_{\xi+\ve\eta})-f(\sL_\xi)} \ve=\E\<D^Lf(\sL_\xi)(\xi),\eta\>.
\end{align*}
\end{lem}

\section{Well-posedness of DDSDE by fractional noises}

In this section, we fix $H\in(0,1),\ti H\in(1/2,1)$ and consider the following DDSDE driven by fractional Brownian motions:
\beq\label{GeEq}
\d X_t= b_t(X_t, \L_{X_t})\d t+\si_t\d B^{H}_t+\ti\si_t(\L_{X_t})\d\ti B^{\ti H}_t,\ \ X_0=\xi,
\end{equation}
where the coefficients $b:[0,T]\times\R^d\times\scr P(\R^d)\rightarrow\R^d, \si:[0,T]\rightarrow\R^d\otimes\R^d,
\ti\si:[0,T]\times\scr P(\R^d)\rightarrow\R^d\otimes\R^d$ are measurable functions,
$(B^H_t)_{t\in[0,T]}$ and $(\ti B^{\ti H}_t)_{t\in[0,T]}$ are two independent fractional Brownian motions with Hurst parameters $H$ and $\ti H$, respectively,
and $\xi\in L^p(\Omega\ra\R^d,\sF_0,\P)$ with $p\geq1$.
%When $H\in(0,1/2), \sigma(t)$ does not depend on $t$.
To show the well-posedness of \eqref{GeEq}, we introduce the following hypothesis.
\beg{enumerate}
\item[\textsc{\textbf{(H1)}}] There exists a non-decreasing function $\kappa_\cdot$ such that for every $t\in[0,T], x,y\in\R^d, \mu,\nu\in\sP_p(\R^d)$,
\beg{align*}
|b_t(x,\mu)-b_t(y,\nu)|\le \kappa_t(|x-y|+\W_p(\mu,\nu)),\ \ \|\ti\si_t(\mu)-\ti\si_t(\nu)\|\le \kappa_t\W_p(\mu,\nu),
\end{align*}
and
\beg{align*}
|b_t(0,\de_0)|+\|\si_t\|+\|\ti\si_t(\de_0)\|\le \kappa_t.
\end{align*}
\end{enumerate}
Now, for every $p\geq1$, let $\cS^p([0,T])$ be the space of  $\R^d$-valued, continuous $(\sF_t)_{t\in[0,T]}$-adapted processes $\psi$ on $[0,T]$ such that
$$\|\psi\|_{\cS^p}:=\bigg(\E\sup_{t\in[0,T]}|\psi_t|^p\bigg)^{1/p}<\8,$$
and let the letter $C$ with or without indices denote generic constants, whose values may change from line to line.

\beg{defn}\label{DefS}
A stochastic process $X=(X_t)_{0\leq t \leq T}$ on $\R^d$ is called a solution of \eqref{GeEq},
if $X\in\cS^p([0,T])$ and $\P$-a.s.,
$$X_t=\xi+\int_0^tb_s(X_s,\sL_{X_s})\d s+\int_0^t\si_s\d B_s^H+\int_0^t\ti\si_s(\L_{X_s})\d\ti B^{\ti H}_s,\ \ t\in[0,T].$$
\end{defn}

\beg{rem}\label{RemS}
Note that $\si_\cdot$ and $\ti\si_\cdot(\sL_{X_\cdot})$ are both deterministic functions,
then $\int_0^t\si_s\d B_s^H$ and $\int_0^t\ti\si_s(\sL_{X_s})\d\ti B_s^{\ti H}$ can be regarded as Wiener integrals with respect to fractional Brownian motions.
\end{rem}

\beg{thm}\label{WP}
Suppose that $\xi\in L^p(\Omega\ra\R^d,\sF_0,\P)$ with $p\geq1$ and one of the following conditions:
\beg{enumerate}
\item[(I)] $H\in(1/2,1)$, $b,\si,\ti\si$ satisfy \textsc{\textbf{(H1)}} and $p>\max\{1/H,1/\ti H\}$;
\item[(II)] $H\in(0,1/2), b,\ti\si$ satisfies \textsc{\textbf{(H1)}}, $\si_t$ does not depend on $t$ and $p>1/\ti H$.
\end{enumerate}
Then Eq. \eqref{GeEq} has a unique solution $X\in\cS^p([0,T])$. Moreover, let $(X_t^\mu)_{t\in[0,T]}$ be the solution to \eqref{GeEq} with $\sL_{X_0}=\mu\in \scr P_p(\R^d)$
and denote $P^*_t\mu=\sL_{X_t^\mu}, t\in[0,T]$. Then it holds
\beg{align}\label{1Est}
\W_p(P^*_t\mu,P^*_t\nu)\leq C_{p,T,\kappa,\ti H}\W_p(\mu,\nu),\ \ \mu,\nu\in\sP_p(\R^d).
\end{align}
\end{thm}
%$\ti\si$ satisfy \textsc{\textbf{(H)}} and $p>1/H$\\
%$H\in(1/2,1)$, $\sigma$ satisfy \textsc{\textbf{(H)}} and $p>1/H$\\
%$H\in(0,1/2)$, $\sigma(t)$ does not depend on $t$
\begin{proof}
Since the case of $H\in(0,1/2)$ is easier, we only handle the case of $H\in(1/2,1)$ and provide a sketch.
For any $\mu\in C([0,T],\scr P_{p})$, consider
\beg{align*}
\d X_t=b_t(X_t,\mu_t)\d t+\si_t\d B_t^H+\ti\si_t(\mu_t)\d\ti B_t^{\ti H},\ \ t\in[0,T], X_0=\xi.
\end{align*}
Denote its solution as $X_t^{\mu,\xi}$. We first assert that $\E\Big(\sup_{t\in[0,T]}|X_t^{\mu,\xi}|^p\Big)<\infty$ with $p>\max\{1/H,1/\ti H\}$.
Indeed, by \textsc{\textbf{(H1)}} and the H\"{o}lder inequality, we deduce that
\beg{align}\label{Pf1-WP}
&\E\bigg(\sup\limits_{t\in[0,T]}|X_t^{\mu,\xi}|^p\bigg)\cr
\leq& 4^{p-1}\E|\xi|^p+12^{p-1}(T\kappa_T)^p\bigg(1+\sup_{t\in[0,T]}\mu_t(|\cdot|^p)+\ff 1 T\E \int_0^T\sup_{s\in[0,t]}|X_s^{\mu,\xi}|^p\d t\bigg)\nonumber\\
& +4^{p-1}\E\left(\sup\limits_{t\in[0,T]}\left|\int_0^t\si_s\d B_s^H\right|^p\right)+4^{p-1}\E\left(\sup\limits_{t\in[0,T]}\left|\int_0^t\ti\si_s(\mu_s)\d\ti B_s^{\ti H}\right|^p\right).
\end{align}
By a similar analysis of \cite[Step 1]{FHSY}, we derive that for any $p>\max\{1/H,1/\ti H\}$,
\beg{align*}
&\E\left(\sup\limits_{t\in[0,T]}\left|\int_0^t\si_s\d B_s^H\right|^p\right)+\E\left(\sup\limits_{t\in[0,T]}\left|\int_0^t\ti\si_s(\mu_s)\d\ti B_s^{\ti H}\right|^p\right)\cr
\leq& C_{p,T,\kappa,H,\ti H}\left(1+\bigg(\sup_{t\in[0,T]}\mu_t(|\cdot|^p)\bigg)\right).
\end{align*}
This, together with \eqref{Pf1-WP}, implies $\E\Big(\sup_{t\in[0,T]}|X_t^{\mu,\xi}|^p\Big)<\infty$.

Now, define the mapping $\Phi^{\xi}:C([0,T],\scr P_p(\R^d))\to C([0,T],\scr P_p(\R^d))$ as
$$\Phi_t^{\xi}(\mu)=\L_{X_t^{\mu,\xi}},\ \ t\in[0,T].$$
By \textsc{\textbf{(H1)}}, we have
\beg{align*}
\E|X_t^{\mu,\xi}-X_t^{\nu,\tilde{\xi}}|^p\leq&3^{p-1}\E|\xi-\tilde{\xi}|^p +3^{p-1}\E\left|\int_0^t(b_s(X_s^{\mu,\xi},\mu_s)-b_s(X_s^{\nu,\tilde{\xi}},\nu_s))\d s\right|^p\cr
&+3^{p-1}\E\left|\int_0^t(\ti\si_s(\mu_s)-\ti\si_s(\nu_s))\d\ti B_s^{\ti H}\right|^p\cr
\leq&3^{p-1}\E|\xi-\tilde{\xi}|^p
+(6t)^{p-1}\kappa^p_t\int_0^t\left(\E|X_s^{\mu,\xi}-X_s^{\nu,\tilde{\xi}}|^p+\W_p(\mu_s,\nu_s)^p\right)\d s\cr
&+3^{p-1}C_{p,\ti H}\kappa^p_tt^{p\ti H-1}\int_0^t\W_p(\mu_s,\nu_s)^p\d s,
\end{align*}
where we use \cite[(3.8) in the proof of Theorem 3.1]{FHSY} and $p>1/{\ti H}$ in the last inequality.\\
Then, we get
\beg{align*}
\E|X_t^{\mu,\xi}-X_t^{\nu,\tilde{\xi}}|^p\leq&3^{p-1}\E|\xi-\tilde{\xi}|^p+C_{p,T,\kappa,\ti H}\int_0^t\E|X_s^{\mu,\xi}-X_s^{\nu,\tilde{\xi}}|^p\d s\cr
&+C_{p,T,\kappa,\ti H}\int_0^t\W_p(\mu_s,\nu_s)^p\d s,
\end{align*}
which, together with the Gronwall lemma, implies
\beg{align}\label{2PfEst}
\E|X_t^{\mu,\xi}-X_t^{\nu,\tilde{\xi}}|^p\leq C_{p,T,\kappa,\ti H}\E|\xi-\tilde{\xi}|^p+C_{p,T,\kappa,\ti H}\int_0^t\W_p(\mu_s,\nu_s)^p\d s.
\end{align}
Therefore, for any $\lambda>0$, we have
  \begin{align*}
\sup_{t\in[0,T]}\e^{-\lambda p t}\W_{p}(\Phi^{\xi}_t(\mu),\Phi^{\xi}_t(\nu))^p\leq \ff{C_{p,T,\kappa,\ti H}}{\lambda}\sup_{t\in[0,T]}\e^{-\lambda p t}\W_{p}(\mu_t, \nu_t)^p.
\end{align*}
Take $\lambda_0$ satisfying $\ff{C_{p,T,\kappa,\ti H}}{\lambda}<\frac{1}{2^p}$ and
let
$E^{\xi}:= \{\mu\in C([0,T];\scr P_p(\R^d)):\mu_0=\L_{\xi}\}$ equipped with the complete metric
$$\rr_{\lambda_0}(\nu,\mu):= \sup_{t\in [0,T]}\e^{-\lambda_0t} \W_{p}(\nu_t,\mu_t),\ \ \mu,\nu\in E^{\xi}.$$
Hence, it holds
  \begin{align*}
\rho_{\lambda_0}(\Phi^{\xi}(\mu),\Phi^{\xi}(\nu))< \frac{1}{2}\rho_{\lambda_0}(\mu,\nu),\ \ \mu,\nu\in E^{\xi}.
\end{align*}
Using the Banach fixed point theorem, we conclude that
\begin{equation*} \Phi^{\xi}_t(\mu)= \mu_t,\ \ t\in [0,T]\end{equation*}  has a unique solution $\mu\in E^{\xi},$ which means that \eqref{GeEq} has a unique strong solution on $[0,T]$ with initial value $\xi$.

Next, applying \eqref{2PfEst} for $\mu_t=P^*_t\mu, \nu_t=P^*_t\nu$, and taking $\xi,\tilde{\xi}$ satisfying $\L_{\xi}=\mu,\L_{\tilde{\xi}}=\nu$ and
$\E|\xi-\tilde{\xi}|^p=\W_{p}(\mu,\nu)^p$, there exists a constant $C_{p,T,\kappa,\ti H}>0$ such that
\begin{align*}
&\sup_{s\in[0,t]}\W_{p}(P^*_s\mu,P^*_s\nu)^p \leq C_{p,T,\kappa,\ti H}\W_{p}(\mu,\nu)^p+
C_{p,T,\kappa,\ti H}\int_0^t\W_{p}(P^*_s\mu,P^*_s\nu)^p \d s,\ \ t\in[0,T].
\end{align*}
So, by the Gronwall inequality, we complete the proof.
\end{proof}

\begin{rem}\label{Re(Est)}
Under the same conditions as Theorem \ref{WP}, we obtain that for any $t\in[0,T]$,
\beg{align*}
\E\bigg(\sup_{s\in[0,t]}|\varrho^\mu_s-\varrho^\nu_s|^p\bigg)\leq C_{p,T,\kappa,\ti H}t^{p\ti H}\W_p(\mu,\nu)^p.
\end{align*}
Here we have set $\varrho^\mu_s:=\int_0^s\ti\si_r(P_r^*\mu)\d\ti B^{\ti H}_r$ for all $s\in[0,T]$ and $\mu\in\sP(\R^d)$.
Indeed, combining \cite[(3.8) in the proof of Theorem 3.1]{FHSY} with \eqref{1Est} yields
\beg{align*}
&\E\bigg(\sup_{s\in[0,t]}|\varrho^\mu_s-\varrho^\nu_s|^p\bigg)
=\E\bigg(\sup_{s\in[0,t]}\left|\int_0^s(\ti\si_r(P_r^*\mu)-\ti\si_r(P_r^*\nu))\d\ti B^{\ti H}_r\right|^p\bigg)\cr
\leq&C_{p,\ti H}\kappa^p_tt^{p\ti H-1}\int_0^t\W_p(P^*_r\mu,P^*_r\nu)^p\d r
\leq C_{p,T,\kappa,\ti H}t^{p\ti H}\W_p(\mu,\nu)^p.
\end{align*}
\end{rem}

\section{Regularities of DDSDEs by fractional noises}

The main objective of this section concerns the regularities for \eqref{GeEq}.
More precisely, for any $t\in[0,T],\mu\in\sP_p(\R^d)$ and $f\in\sB_b(\R^d)$, let
\beg{align}\label{DeOp}
(P_t f)(\mu)=\int_{\R^d} f\d(P^*_t\mu)
\end{align}
with $P^*_t\mu:=\sL_{X_t^\mu}$ for $X_t^\mu$ solving \eqref{GeEq} with initial distribution $\mu$, and then introduce the functionals
\beg{align*}
\sP_p(\R^d)\ni\mu\mapsto(P_t f)(\mu),\ \ t\in[0,T], \ f\in\sB_b(\R^d).
\end{align*}
Based on the coupling argument and a careful analysis of fractional derivative operator, we shall establish the log-Harnack inequalities and the Bismut formulas for these functionals in
both non-degenerate and degenerate cases.

\subsection{The non-degenerate case}

This part is devoted to the regularities for the non-degenerate case of \eqref{GeEq}.
We begin with the following assumption.
\beg{enumerate}
\item[\textbf{(H1')}] For every $t\in[0,T]$, $b_t(\cdot,\cdot)\in C^{1,(1,0)}(\R^d\times\sP_p(\R^d))$.
Moreover, there exists a non-decreasing function $\kappa_\cdot$ such that for any $t\in[0,T],\ x,y\in\R^d,\ \mu,\nu\in\sP_p(\R^d)$,
\beg{align*}
\|\nabla b_t(\cdot,\mu)(x)\|+|D^Lb_t(x,\cdot)(\mu)(y)|\le \kappa_t, \ \ \|\ti\si_t(\mu)-\ti\si_t(\nu)\|\le \kappa_t\W_p(\mu,\nu),
\end{align*}
and $|b_t(0,\de_0)|+\|\si_t\|+\|\ti\si_t(\de_0)\|\leq \kappa_t$.
\end{enumerate}
Observe that with the help of the fundamental theorem for Bochner integral (see, for instance, \cite[Proposition A.2.3]{LWbook})
and the definitions of $L$-derivative and the Wasserstein distance, \textsc{\textbf{(H1')}} implies that for each $p\geq1$,
\beg{align*}
|b_t(x,\mu)-b_t(y,\nu)|\le\kappa_t(|x-y|+\W_p(\mu,\nu)),\ \  t\in[0,T],\ x,y\in\R^d,\ \mu,\nu\in\sP_p(\R^d).
\end{align*}
So, according to Theorem \ref{WP}, \eqref{GeEq} admits a unique solution.
To investigate the regularities, in additional to \textsc{\textbf{(H1')}}, we also need the following condition.
\beg{enumerate}
\item[\textsc{\textbf{(H2)}}] There exists a constant $\ti{\kappa}>0$ such that

\item[(i)]  for any $t,s\in[0,T],\ x,y,z_1,z_2\in\R^d,\ \mu,\nu\in\sP_p(\R^d)$,
\beg{align*}
&\|\nabla b_t(\cdot,\mu)(x)-\nabla b_s(\cdot,\nu)(y)\|+|D^Lb_t(x,\cdot)(\mu)(z_1)-D^Lb_s(y,\cdot)(\nu)(z_2)|\cr
&\le \ti{\kappa}(|t-s|^{\alpha}+|x-y|^{\beta}+|z_1-z_2|^{\gamma}+\W_p(\mu,\nu)),
\end{align*}
where $\alpha\in(H-1/2,1]$ and $\beta,\gamma\in(1-1/(2H),1]$.

\item[(ii)] $\si$ is invertible and $\si^{-1}$ is H\"{o}lder continuous of order $\delta\in(H-1/2,1]$:
\beg{align*}
\|\si^{-1}(t)-\si^{-1}(s)\|\le\ti{\kappa}|t-s|^{\delta}, \ \  t,s\in[0,T].
\end{align*}
\end{enumerate}

\subsubsection{Log-Harnack inequality}

Our main goal in the current part is to prove the following log-Harnack inequality.

\begin{thm}\label{Th(Har)}
Consider Eq. \eqref{GeEq}.
If one of the two following assumptions holds:
\beg{enumerate}
\item[(I)] $H\in(1/2,1)$, $b,\si,\ti\si$ satisfy \textsc{\textbf{(H1')}}, \textsc{\textbf{(H2)}} and $p\geq 2(1+\be)$;
\item[(II)] $H\in(0,1/2), b,\ti\si$ satisfies \textsc{\textbf{(H1)}}, $\si_t$ does not depend on $t$ and $p\geq 2$.
\end{enumerate}
Then for any $t\in(0,T], \mu,\nu\in\sP_p(\R^d)$ and $0<f\in\sB_b(\R^d)$,
\beg{align}\label{1Th(Har)}
(P_t\log f)(\nu)\leq\log(P_t f)(\mu)+\varpi(H),
\end{align}
where
\begin{equation}\nonumber
\varpi(H)=
\left\{
\begin{array}{ll}\vspace{0.3cm}
 C_{T,\kappa,\ti\kappa,H,\ti H}\left(1+\W_p(\mu,\nu)^{2\be}+\ff 1 {t^{2H}}\right)\W_p(\mu,\nu)^2,\ \ \ \ \ \ \ H\in(1/2,1),\\
 C_{T,\kappa,H,\ti H}\left(1+\ff 1{t^{2H}}\right)\W_p(\mu,\nu)^2,\ \ \ \ \ \ \ \ \ \ \ \ \ \ \ \ \ \ \ \ \ \ \ \ \ \ H\in(0,1/2).
\end{array} \right.
\end{equation}
\end{thm}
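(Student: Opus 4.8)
The plan is to prove the log-Harnack inequality \eqref{1Th(Har)} via a backward-coupling argument in the spirit of \cite{Wang12b} and \cite{HW22}, combined with an analysis of the operator $K_H^{-1}$ acting on the coupling drift. I focus on the non-degenerate case $H\in(1/2,1)$ under (I); the case $H\in(0,1/2)$ is analogous and simpler since no H\"older-in-space regularity of the drift is needed there. Fix $t\in(0,T]$, $\mu,\nu\in\sP_p(\R^d)$, and let $X=X^\mu$ solve \eqref{GeEq} with $\sL_{X_0}=\mu$ driven by the Volterra representation $B^H_s=\int_0^sK_H(s,r)\d W_r$. Since $\ti\si_\cdot(\sL_{X_\cdot})$ and $\ti\si_\cdot(\sL_{Y_\cdot})$ are deterministic once the marginal flows are fixed, by Theorem \ref{WP} the flow $P^*_\cdot\nu$ is already determined; so I first \emph{freeze} the measure arguments, letting $\rr^\nu_s=\int_0^s\ti\si_r(P^*_r\nu)\d\ti B^{\ti H}_r$ and $\rr^\mu_s=\int_0^s\ti\si_r(P^*_r\mu)\d\ti B^{\ti H}_r$, and construct the coupling process $Y$ on $[0,t]$ by
\beq\label{pf-couple}
\d Y_s = \Big(b_s(Y_s,P^*_s\nu) + \si_s\,\xi_s\Big)\d s + \si_s\,\d B^H_s + \ti\si_s(P^*_s\nu)\,\d\ti B^{\ti H}_s,\qquad Y_0=\eta,
\end{equation}
where $\eta$ is chosen with $\sL_\eta=\nu$ and $\E|X_0-\eta|^p=\W_p(\mu,\nu)^p$, and the adapted drift correction $\xi_\cdot$ is designed so that $Y_t=X_t$ almost surely. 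Writing $\Theta_s=X_s-Y_s$ and $g_s = \si_s^{-1}\big(b_s(X_s,P^*_s\mu)-b_s(Y_s,P^*_s\nu)\big) + (\rr^\mu_s-\rr^\nu_s)'$-type terms, the natural choice is the deterministic-coefficient analogue of Wang's coupling: force $\Theta_s$ to follow a prescribed path $h_s$ with $h_0=X_0-\eta$ and $h_t=0$ (e.g. $h_s=(1-s/t)(X_0-\eta)$, or a smoother profile if needed for the fractional derivative), and set $\xi_s$ to be exactly the quantity that makes $\d\Theta_s = \d h_s$. Concretely $\xi_s = \si_s^{-1}\big(b_s(X_s,P^*_s\mu)-b_s(Y_s,P^*_s\nu) + (\rr^\mu_s-\rr^\nu_s)\text{-correction} - h_s'\big)$ up to reorganizing the drift of $Y$.

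Next I apply Girsanov's theorem in the Volterra/FBM setting: the process $\tilde W_s = W_s + \int_0^s (K_H^{-1}\!\int_0^\cdot \si_r\xi_r\,\d r)(u)\,\d u$ is, under a new measure $\Q$ with $\d\Q/\d\P = R$, a Brownian motion, provided $K_H^{-1}\big(\int_0^\cdot\si_r\xi_r\d r\big)\in L^2([0,t];\R^d)$ a.s.\ and the Novikov-type integrability $\E R\log R<\infty$ holds. Under $\Q$, $(Y_s)$ solves the same equation as $(X_s)$ with the frozen flow $P^*_\cdot\nu$, and since by \eqref{pf-couple} $Y_t=X_t$, the construction gives $\sL_{X_t}$ under $\P$ has density-type relation with $\sL_{Y_t}$ under $\Q$; but one must be careful because $Y$ uses the flow $P^*_\cdot\nu$, not $P^*_\cdot\mu$. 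The key point, exactly as in \cite{HW22}, is that $\sL(Y_t\,|\,\Q)=P^*_t\nu$ while $\sL(Y_t\,|\,\P)$ is what feeds the log-Harnack bound through Young's inequality $\E_\P[\log f(Y_t)]\le \log\E_\P[f(X_t)] + \E_\P[R^{-1}\log R^{-1}]$ — i.e.\ the entropy cost $\E_\Q[\log(\d\Q/\d\P)^{-1}]$. So the proof reduces to: (a) verifying $Y_t=X_t$ a.s.\ (pathwise ODE/SDE comparison given the explicit $\xi$), (b) bounding $\int_0^t |K_H^{-1}(\int_0^\cdot\si_r\xi_r\d r)(u)|^2\,\d u$, and (c) taking expectations to control the relative entropy $\E_\Q[\log R]=\tfrac12\E_\Q\int_0^t|\cdots|^2\,\d u$.

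The main obstacle will be step (b): estimating $K_H^{-1}$ applied to the drift $\int_0^\cdot \si_r\xi_r\,\d r$. By the Weyl representation \eqref{FrDe} and the formula \eqref{InOp} for $H\in(1/2,1)$, $(K_H^{-1}h)(s)=s^{H-1/2}D_{0+}^{H-1/2}\big(u^{1/2-H}h'(u)\big)(s)$, so one needs $h'(s)=\si_s\xi_s$ to be H\"older continuous of an order exceeding $H-1/2$ after multiplication by $s^{1/2-H}$. This is precisely where assumptions \textsc{(H2)}(i)--(ii) enter: the H\"older-$\alpha$ regularity in time of $\nabla b$ and $D^Lb$ (with $\alpha>H-1/2$), the H\"older-$\beta,\gamma$ regularity in the spatial and measure-variable directions (with $\beta,\gamma>1-1/(2H)$, which ensures that the $H$-H\"older-type regularity of the paths $X_s,Y_s$ composes correctly), and the H\"older-$\delta$ regularity of $\si^{-1}$ all combine to give that $s\mapsto\si_s^{-1}(b_s(X_s,P^*_s\mu)-b_s(Y_s,P^*_s\nu))$ is H\"older of the right order, using that $X,Y$ themselves have $(H^-)$-H\"older continuous (indeed better, because the noise is frozen/Gaussian) sample paths and that $h_s'$ is smooth by our choice of profile. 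I will then bound the resulting fractional derivative by the standard estimate $\|D_{0+}^{H-1/2} F\|_{L^2}\lesssim \|F\|_\infty + [F]_{C^{\theta}}$ for $\theta>H-1/2$, and the term $h_s' \sim -(X_0-\eta)/t$ contributes the $t^{-2H}$-type singularity after passing through $s^{H-1/2}D^{H-1/2}_{0+}(\cdot)$; combined with the contribution $\E|X_0-\eta|^2=\W_p(\mu,\nu)^2$ (here $p\ge 2(1+\beta)$ guarantees the higher moments needed to absorb the $\beta$-power cross terms via H\"older's inequality, producing the $\W_p(\mu,\nu)^{2\beta}$ factor) this yields exactly $\varpi(H)$. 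Finally, (c) requires upgrading the a.s.\ $L^2$-bound to an $L^1(\Q)$ bound on $\log R$; here I use that all the moment estimates from Theorem \ref{WP} and Remark \ref{Re(Est)} hold under $\P$, and a standard argument (approximating by stopping times, then using uniform integrability coming from the $p\ge 2(1+\beta)$ moment bound) transfers control of $\E_\Q\int_0^t|K_H^{-1}(\cdots)|^2$ to $\E_\P$ of the same quantity, completing the proof. The degenerate-looking case $H\in(0,1/2)$ under (II) is handled identically but with \eqref{-HOF} in place of \eqref{InOp}, and since $\si$ is constant there, no H\"older-in-time hypothesis on $\si^{-1}$ or $\nabla b$ is required — which is why only \textsc{(H1)} and $p\ge 2$ are assumed.
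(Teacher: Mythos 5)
Your overall strategy coincides with the paper's: a backward coupling in the sense of Wang forcing $Y_{t_0}=X^\mu_{t_0}$, a Girsanov transformation for the underlying Wiener process expressed through $K_H^{-1}$, Young's inequality to produce the relative-entropy term, and fractional-calculus estimates under \textsc{\textbf{(H2)}} to bound $\int_0^{t_0}|K_H^{-1}(\int_0^\cdot\si_r^{-1}\zeta_r\,\d r)(s)|^2\d s$, with the $t^{-2H}$ singularity coming from the $(X_0^\mu-X_0^\nu)/t_0$ part of the drift and the $\W_p(\mu,\nu)^{2\be}$ factor from the H\"older cross terms. However, there is one genuine gap. The drift correction that makes the coupling close at time $t_0$ must contain the terminal quantity $\varrho^\mu_{t_0}-\varrho^\nu_{t_0}=\int_0^{t_0}(\ti\si_r(P_r^*\mu)-\ti\si_r(P_r^*\nu))\,\d\ti B^{\ti H}_r$: the difference of the $\ti\si$-integrals cannot be absorbed instantaneously into the drift of $Y$ (it is not absolutely continuous), so the residual $\varrho^\nu_s-\varrho^\mu_s$ necessarily stays in $Y_s-X^\mu_s$ and is cancelled only at $s=t_0$ by adding the constant rate $\ff1{t_0}(\varrho^\mu_{t_0}-\varrho^\nu_{t_0})$. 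Consequently your $\xi_\cdot$ is \emph{not} adapted to the filtration generated by $W$ and $\sF_0$ (it anticipates the whole path of $\ti B^{\ti H}$), your prescribed difference profile cannot be the purely deterministic $h_s=(1-s/t)(X_0-\eta)$, and the Girsanov theorem cannot be applied under $\P$ as you state it. The paper resolves this by exploiting the independence of $\ti B^{\ti H}$ and $W$: the Girsanov transformation is carried out under the conditional probability $\P^{\ti H,0}=\P(\,\cdot\,|\ti B^{\ti H},\sF_0)$, under which the offending terms are (conditionally) deterministic; the coupling identity then gives a conditional log-Harnack inequality for $P^{\ti H,0}_{t_0}$, and one de-conditions by taking $\E[\,\cdot\,]$ and applying Jensen's inequality to $\log$. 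This two-layer structure $\E[\E^{\ti H,0}(\cdots)]$ is an essential ingredient, not a technicality, and your proposal omits it.

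Two smaller points. First, your Young/entropy step is written with the wrong density: the cost term is $\E_\P[R\log R]=\mathrm{Ent}(R\,\d\P\,|\,\d\P)\ge0$, not $\E_\P[R^{-1}\log R^{-1}]$ or $\E_\Q[\log(\d\Q/\d\P)^{-1}]$, which would be nonpositive. Second, in bounding the $I_4$-type term you must control H\"older seminorms of the random paths $\varrho^\mu_\cdot$, $\varrho^\nu_\cdot$ and $\int_0^\cdot\si_t\,\d B^H_t$ raised to the power $2\be$ and, for the Novikov-type condition, exponentially; the paper invokes the Fernique theorem for this, and the moment condition $p\ge 2(1+\be)$ is what lets the resulting products be integrated against $\psi^2(X_0,\varrho)$. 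Your appeal to ``uniform integrability from the moment bound'' is too vague to cover the exponential integrability needed for the Girsanov density to be a true martingale.
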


\begin{rem}\label{ReTh(Har)}
The log-Harnack inequality obtained above is equivalent to the following entropy-cost estimate
\beg{align*}
\mathrm{Ent}(P_t^*\nu|P_t^*\mu)\leq\varpi(H),\ \ t\in(0,T], \mu,\nu\in\sP_p(\R^d),
\end{align*}
where $\mathrm{Ent}(P_t^*\nu|P_t^*\mu)$ is the relative entropy of $P_t^*\nu$ with respect to $P_t^*\mu$ and $p$ is given as in Theorem \ref{Th(Har)}.
\end{rem}

\emph{Proof of Theorem \ref{Th(Har)}.}
For every $\mu,\nu\in\sP_p(\R^d)$, choose $\sF_0$-measurable $X_0^\mu$ and $X_0^\nu$ such that $\sL_{X_0^\mu}=\mu,\sL_{X_0^\nu}=\nu$ and
\beg{align}\label{1PfTh(Har)}
\E|X_0^\mu-X_0^\nu|^p=\W_p(\mu,\nu)^p.
\end{align}
Let $X_t^\mu$ and $X_t^\nu$ be two solutions to \eqref{GeEq} such that $\sL_{X_0^\mu}=\mu$ and $\sL_{X_0^\nu}=\nu$, respectively,
which yields that $\sL_{X_t^\mu}=P_t^*\mu$ and  $\sL_{X_t^\nu}=P_t^*\nu$.

For fixed $t_0\in(0,T]$, we first consider the following coupling DDSDE:
\beg{align}\label{2PfTh(Har)}
\d Y_t=&\left[b_t(X_t^\mu,P_t^*\mu)+\ff 1 {t_0}(X_0^\mu-X_0^\nu+\varrho^\mu_{t_0}-\varrho^\nu_{t_0})\right]\d t\cr
&+\si_t\d B^{H}_t+\ti\si_t(P_t^*\nu)\d\ti B^{\ti H}_t,\ \ t\in[0,t_0]
\end{align}
with $Y_0=X_0^\nu$.
Recall that  $\varrho^\mu_s=\int_0^s\ti\si_r(P_r^*\mu)\d\ti B^{\ti H}_r, (s,\mu)\in[0,T]\times\sP_p(\R^d)$ is defined in
Remark \ref{Re(Est)}.
Taking into account of this and \eqref{GeEq} for $(X_t^\mu,P_t^*\mu)$ replacing $(X_t,\sL_{X_t})$, we obtain
\beg{align}\label{3PfTh(Har)}
Y_t-X_t^\mu=\ff {t-t_0} {t_0}(X_0^\mu-X_0^\nu)+\ff {t} {t_0}(\varrho^\mu_{t_0}-\varrho^\nu_{t_0})+\varrho^\nu_t-\varrho^\mu_t,\ \ t\in[0,t_0].
\end{align}
In particular, one has $Y_{t_0}=X_{t_0}^\mu$.

Next, we intend to express $P_{t_0}f(\nu)$ in terms of $Y_{t_0}$.
To this end, we first rewrite Eq. \eqref{2PfTh(Har)} as
\beg{align}\label{4PfTh(Har)}
\d Y_t=b_t(Y_t,P_t^*\nu)\d t+\si_t\d\bar{B}^{H}_t+\ti\si_t(P_t^*\nu)\d\ti B^{\ti H}_t,\ \ t\in[0,t_0],
\end{align}
where
\beg{align*}
\bar{B}^{H}_t:=B^{H}_t-\int_0^t\si^{-1}_s\zeta_s\d s
=\int_0^tK_H(t,s)\left(\d W_s-K_H^{-1}\left(\int_0^\cdot\si^{-1}_r\zeta_r\d r\right)(s)\d s\right)
\end{align*}
with
\beg{align*}
\zeta_s:=b_s(Y_s,P_s^*\nu)-b_s(X_s^\mu,P_s^*\mu)-\ff 1 {t_0}(X_0^\mu-X_0^\nu+\varrho^\mu_{t_0}-\varrho^\nu_{t_0}).
\end{align*}
Set
\beg{align*}
R^{\ti H,0}:=\exp\left[\int_0^{t_0}\left\langle K_H^{-1}\left(\int_0^\cdot\si^{-1}_r\zeta_r\d r\right)(s),\d W_s\right\rangle-\ff 1 2\int_0^{t_0}\left|K_H^{-1}\left(\int_0^\cdot\si^{-1}_r\zeta_r\d r\right)(s)\right|^2\d s\right].
\end{align*}
On one hand, with the help of Remark \ref{Re(GirsT)} (i) below and the fractional Girsanov theorem (see, e.g., \cite[Theorem 4.9]{DU98} or \cite[Theorem 2]{NO02}), we know that $(\bar{B}^{H}_t)_{t\in[0,t_0]}$ is a $d$-dimensional fractional Brownian motion under the conditional probability $R^{\ti H,0}\d\P^{\ti H,0}$.
Here and in the sequel, we use $\P^{\ti H,0}$ and $\E^{\ti H,0}$ to denote the conditional probability and the conditional expectation given both $\ti B^{\ti H}$ and $\sF_0$, i.e.
\beg{align*}
\P^{\ti H,0}=\P(\ \cdot\ |\ti B^{\ti H},\sF_0),\ \ \E^{\ti H,0}=\E(\ \cdot\ |\ti B^{\ti H},\sF_0).
\end{align*}
On the other hand, let $\bar{Y}_t=Y_t-\varrho^\nu_t$ and then \eqref{4PfTh(Har)} can be written as
\beg{align*}
\d \bar{Y}_t=b_t(\bar{Y}_t+\varrho^\nu_t,P_t^*\nu)\d t+\si_t\d\bar{B}^{H}_t,\ \ t\in[0,t_0], \ \ \bar Y_0=Y_0=X_0^\nu.
\end{align*}
Note that $\bar{X}_\cdot^\nu:=X_\cdot^\nu-\varrho^\nu_\cdot$ satisfies SDE of the same form
\beg{align*}
\d \bar{X}^\nu_t=b_t(\bar{X}^\nu_t+\varrho^\nu_t,P_t^*\nu)\d t+\si_t\d B^{H}_t,\ \ t\in[0,t_0],\ \ \bar X_0^\nu=X_0^\nu.
\end{align*}
Therefore, by the weak uniqueness of the solution we derive that the law of $\bar{Y}_{t_0}$ under $R^{\ti H,0}\d\P^{\ti H,0}$ is the same as that of $ \bar{X}^\nu_{t_0}$ under $\P^{\ti H,0}$.
Consequently, we conclude that the law of $Y_{t_0}=\bar{Y}_{t_0}+\varrho^\nu_{t_0}$ under $R^{\ti H,0}\d\P^{\ti H,0}$
is also the same as one of $X_{t_0}^\nu=\bar{X}^\nu_{t_0}+\varrho^\nu_{t_0}$ under $\P^{\ti H,0}$
due to the fact that  $\varrho^\nu_{t_0}$ is deterministic given $\ti B^{\ti H}$.
This, along with $Y_{t_0}=X_{t_0}^\mu$, yields that for any $f\in\sB_b(\R^d)$,
\beg{align}\label{5PfTh(Har)}
(P_{t_0}^{\ti H,0}f)(X_0^\nu):=\E^{\ti H,0}f(X_{t_0}^\nu)=\E_{R^{\ti H,0}\P^{\ti H,0}}f(Y_{t_0})=\E_{R^{\ti H,0}\P^{\ti H,0}}f(X_{t_0}^\mu).
\end{align}

Now, owing to \eqref{DeOp} and \eqref{5PfTh(Har)}, we deduce that for every $0<f\in\sB_b(\R^d)$,
\beg{align}\label{6PfTh(Har)}
&(P_{t_0}\log f)(\nu)=\E\left[\E^{\ti H,0}(\log f(X_{t_0}^\nu))\right]=\E\left[(P_{t_0}^{\ti H,0}\log f)(X_0^\nu)\right]\cr
=&\E\left[\E_{R^{\ti H,0}\P^{\ti H,0}}\log f(X_{t_0}^\mu)\right]=\E\left[\E^{\ti H,0}\left(R^{\ti H,0}\log f(X_{t_0}^\mu)\right)\right]\cr
\leq&\E\left[\log\E^{\ti H,0}f(X_{t_0}^\mu)+\E^{\ti H,0}\left(R^{\ti H,0}\log R^{\ti H,0}\right)\right]\cr
=&\E\left[\log(P_{t_0}^{\ti H,0}f)(X_0^\mu)\right]+\ff 1 2\E\left[\E^{\ti H,0}\left(\int_0^{t_0}\left|K_H^{-1}\left(\int_0^\cdot\si^{-1}_r\zeta_r\d r\right)(s)\right|^2\d s\right)\right],
\end{align}
where we use the Young inequality (see, e.g., \cite[Lemma 2.4]{ATW09}) in the inequality.\\
Using the Jensen inequality and Lemma \ref{GirsT} below, we have
\beg{align}\label{7PfTh(Har)}
(P_{t_0}\log f)(\nu)\leq&\log\E\left((P_{t_0}^{\ti H,0}f)(X_0^\mu)\right)+\ff 1 2\E\vartheta(H)\cr
=&\log(P_{t_0}f)(\mu)+\ff 1 2\E\vartheta(H),\ \ t_0\in(0,T], \mu,\nu\in\sP_p(\R^d).
\end{align}
Consequently, using Remark \ref{Re(GirsT)} (ii), we obtain the desired relations.
Our proof is now finished.
\qed

The following lemma and Remark \ref{Re(GirsT)} below consist of estimates on the function $K_H^{-1}\left(\int_0^\cdot\si^{-1}_r\zeta_r\d r\right)(s)$,
which may contribute to the study of the Girsanov transformation for the fractional Brownian motion case and then the log-Harnack inequality \eqref{1Th(Har)}.
Before going on, for any given continuous function $f:[0,T]\ra\R^d$ and H\"{o}lder continuous function $g:[0,T]\ra\R^d$ of order $\alpha\in(0,1)$, we put
\beg{align*}
\|f\|_\infty:=\sup_{t\in[0,T]}|f(t)|,\ \  \|g\|_\alpha:=\sup_{0\leq s<t\leq T}\ff {|g(t)-g(s)|}{(t-s)^\alpha}.
\end{align*}

\begin{lem}\label{GirsT}
Let the assumptions in Theorem \ref{Th(Har)} hold, then for any $\mu,\nu\in\sP_p(\R^d)$ with $p\geq 2(1+\be)$ if $H\in(1/2,1)$ or $p\geq 2$ if $H\in(0,1/2)$,
\beg{align*}
\E^{\ti H,0}\left(\int_0^{t_0}\left|K_H^{-1}\left(\int_0^\cdot\si^{-1}_r\zeta_r\d r\right)(s)\right|^2\d s\right)\leq\vartheta(H),
\end{align*}
where
\begin{equation}\nonumber
\vartheta(H)=
\left\{
\begin{array}{ll}\vspace{0.3cm}
C_{T,\kappa,\ti\kappa,H,\ti H}\Bigg[\W_p(\mu,\nu)^2+\bigg(\ff 1 {t_0^{2H}}+\|\varrho^\mu_\cdot\|^{2\be}_{\ti H-\varsigma_1}+
\|\varrho^\nu_\cdot\|^{2\be}_{\ti H-\varsigma_2}+\psi^{2\beta}(X_0,\varrho)\bigg)\psi^2(X_0,\varrho)\cr
\qquad\qquad\quad+\left(1+|X_0^\mu|^{2\be}+\|\varrho^\mu_\cdot\|_\infty^{2\be}+\left\|\varrho^\mu_\cdot\right\|^{2\be}_{\ti H-\varsigma_1}\right)(\W_p(\mu,\nu)^2+\psi^2(X_0,\varrho))\cr
\qquad\qquad\quad+\int_0^{t_0}s^{2H-1}\left(\int_0^s\frac{|\varrho^\nu_s-\varrho^\mu_s-(\varrho^\nu_r-\varrho^\mu_r)|}{(s-r)^{\frac{1}{2}+H}}r^{\ff 1 2-H}\d r\right)^2\d s\Bigg],\ \ \ \ \ \ \ H\in(1/2,1),\\
 C_{T,\kappa,H,\ti H}\left(\ff {\psi^2(X_0,\varrho)} {t_0^{2H}}+\W_p(\mu,\nu)^2\right),\ \ \ \ \ \ \ \ \ \ \ \ \ \ \ \ \ \ \ \ \ \ \ \ \ \ \ \ \ \ \ \ \ \ \ \ \  H\in(0,1/2),
\end{array} \right.
\end{equation}
with $\psi(X_0,\varrho):=|X_0^\mu-X_0^\nu|+\sup_{s\in[0,t_0]}|\varrho^\mu_s-\varrho^\nu_s|$ and $\varsigma_i\in(0,1/2),i=1,2,3$.
\end{lem}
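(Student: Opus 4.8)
The plan is to bound the quantity
\[
\E^{\ti H,0}\left(\int_0^{t_0}\left|K_H^{-1}\left(\int_0^\cdot\si^{-1}_r\zeta_r\d r\right)(s)\right|^2\d s\right)
\]
by first analyzing the integrand pathwise, using the explicit Weyl-type formulas for $K_H^{-1}$ in \eqref{InOp} and \eqref{-HOF}, and only afterward taking the conditional expectation. Since $h(s):=\int_0^s\si^{-1}_r\zeta_r\d r$ is absolutely continuous with $h'(s)=\si^{-1}_s\zeta_s$, for $H\in(0,1/2)$ formula \eqref{-HOF} gives $(K_H^{-1}h)(s)=s^{H-1/2}I_{0+}^{1/2-H}s^{1/2-H}h'$, and for $H\in(1/2,1)$ we use $(K_H^{-1}h)(s)=s^{H-1/2}D_{0+}^{H-1/2}(s^{1/2-H}h')$ with the Weyl representation \eqref{FrDe} of the fractional derivative. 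The degenerate-in-exponent parts ($t_0^{-2H}$ factors and the singular weights $s^{H-1/2}$) will be handled by the standard estimates for Riemann--Liouville operators on $L^2$, exactly as in \cite{FHSY}; the extra work here is to track the dependence on $\zeta$.

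Next I would decompose $\zeta_s = b_s(Y_s,P_s^*\nu)-b_s(X_s^\mu,P_s^*\mu)-\ff1{t_0}(X_0^\mu-X_0^\nu+\varrho^\mu_{t_0}-\varrho^\nu_{t_0})$. Using \eqref{3PfTh(Har)} we have $Y_s-X_s^\mu=\ff{s-t_0}{t_0}(X_0^\mu-X_0^\nu)+\ff{s}{t_0}(\varrho^\mu_{t_0}-\varrho^\nu_{t_0})+\varrho^\nu_s-\varrho^\mu_s$, so the ``diagonal'' term $b_s(Y_s,P_s^*\nu)-b_s(X_s^\mu,P_s^*\nu)$ can be written via the fundamental theorem of calculus as $\int_0^1\nabla b_s(\cdot,P_s^*\nu)(X_s^\mu+\theta(Y_s-X_s^\mu))\,\d\theta\cdot(Y_s-X_s^\mu)$, and the measure term $b_s(X_s^\mu,P_s^*\nu)-b_s(X_s^\mu,P_s^*\mu)$ via Lemma \ref{FoLD} and \textsc{(H1')}, giving a bound by $\kappa_s\W_p(P_s^*\nu,P_s^*\mu)\le C\kappa_s\W_p(\mu,\nu)$ using \eqref{1Est}. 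The crucial point for $H\in(1/2,1)$ is that applying $D_{0+}^{H-1/2}$ to $s^{1/2-H}\si^{-1}_s\zeta_s$ produces, through \eqref{FrDe}, increments of the form $\zeta_s-\zeta_r$ divided by $(s-r)^{H+1/2}$. To control these we need Hölder-type continuity of $s\mapsto\si^{-1}_s\zeta_s$: this is where \textsc{(H2)(ii)} (Hölder continuity of $\si^{-1}$ of order $\delta>H-1/2$) and \textsc{(H2)(i)} (joint Hölder continuity of $\nabla b$ and $D^Lb$ in time of order $\alpha>H-1/2$ and in the space variables of orders $\beta,\gamma>1-1/(2H)$) enter. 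The ``$X_0$'' and ``$\varrho$'' contributions to the increments of $\zeta$ are estimated using the Hölder seminorms $\|\varrho^\mu_\cdot\|_{\ti H-\varsigma_i}$ (finite a.s.\ since $\ti\si_\cdot(P_\cdot^*\mu)$ is deterministic and bounded, so $\varrho^\mu_\cdot$ is a Wiener integral w.r.t.\ $\ti B^{\ti H}$ with $\ti H>1/2$, hence Hölder of any order $<\ti H$), and the increments $|X_s^\mu-X_r^\mu|$ via the $\beta$-Hölder bound, which is precisely why $\beta>1-1/(2H)$ is needed for the resulting integral $\int_0^{t_0}(s-r)^{\beta-H-1/2}$ to converge. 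The $\psi(X_0,\varrho)$ quantity collects the ``size'' $|X_0^\mu-X_0^\nu|+\sup_s|\varrho^\mu_s-\varrho^\nu_s|$ of the coupling discrepancy, and the $\psi^{2\beta}$-type factors arise from pulling out the supremum of the $\theta$-integral of $\nabla b$ evaluated along the interpolating path.

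Then I would square, integrate in $s$ over $[0,t_0]$, and use Cauchy--Schwarz/Hölder to separate the singular kernels from the process-dependent factors; this yields a pathwise bound by the bracketed expression in the statement but with $\W_p(\mu,\nu)$ replaced, in the genuinely distance-dependent terms, by the random quantity $\psi(X_0,\varrho)$ together with seminorms of $\varrho^\mu,\varrho^\nu$ and the doubly-integrated term $\int_0^{t_0}s^{2H-1}\big(\int_0^s|\varrho^\nu_s-\varrho^\mu_s-(\varrho^\nu_r-\varrho^\mu_r)|(s-r)^{-1/2-H}r^{1/2-H}\d r\big)^2\d s$. Finally I would take $\E^{\ti H,0}$: conditioning on $\ti B^{\ti H}$ and $\sF_0$ makes $X_0^\mu,X_0^\nu,\varrho^\mu,\varrho^\nu$ (all of them!) measurable, so $\E^{\ti H,0}$ acts only on the remaining randomness coming from $B^H$, i.e.\ essentially on $X_s^\mu$; the needed moment bounds $\E^{\ti H,0}\sup_s|X_s^\mu|^{2\beta}<\infty$ follow from Theorem \ref{WP} (this is where $p\ge 2(1+\beta)$ for $H\in(1/2,1)$, resp.\ $p\ge 2$ for $H\in(0,1/2)$, is used) so everything with a $\varrho$ or $X_0$ label passes through the conditional expectation unchanged, which is exactly the form $\vartheta(H)$ is written in. For $H\in(0,1/2)$ the integrand is given by $I_{0+}^{1/2-H}$ rather than a fractional derivative, so no increment terms appear and the much simpler bound $C(\psi^2(X_0,\varrho)/t_0^{2H}+\W_p(\mu,\nu)^2)$ results directly.

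The main obstacle I anticipate is the $H\in(1/2,1)$ case: handling the Weyl representation \eqref{FrDe} of $D_{0+}^{H-1/2}$ applied to the product $s^{1/2-H}\si^{-1}_s\zeta_s$ requires carefully splitting the increment $s^{1/2-H}\si^{-1}_s\zeta_s - r^{1/2-H}\si^{-1}_r\zeta_r$ into (a) the increment of the deterministic weight $s^{1/2-H}$, (b) the increment of $\si^{-1}$, and (c) the increment of $\zeta$, and then further unpacking (c) into a drift-regularity part (controlled by $\alpha,\beta,\gamma$), an $X^\mu$-path-increment part (controlled by $\beta$), a $\varrho$-increment part (controlled by the Hölder seminorms $\|\varrho^\cdot\|_{\ti H-\varsigma_i}$), and the linear-in-$s$ term $\ff s{t_0}(\varrho^\mu_{t_0}-\varrho^\nu_{t_0})$ whose increment is trivially Lipschitz. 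Each of these must produce a kernel $(s-r)^{-H-1/2+\text{exponent}}$ that is integrable near $r=s$, which is precisely the arithmetic that forces $\alpha,\delta>H-1/2$ and $\beta,\gamma>1-1/(2H)$; making all these conditions line up simultaneously, and keeping the bookkeeping of which factor carries which power of $\psi$, $|X_0^\mu|$, or $\W_p(\mu,\nu)$, is the delicate part. Everything else reduces to standard $L^2$-boundedness of Riemann--Liouville operators and the a priori estimates already established in Theorem \ref{WP} and Remark \ref{Re(Est)}.
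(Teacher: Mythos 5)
Your proposal follows essentially the same route as the paper's proof: the Weyl representation of $D_{0+}^{H-1/2}$ splits $K_H^{-1}\left(\int_0^\cdot\si^{-1}_r\zeta_r\d r\right)$ into a boundary term plus increments of the weight $s^{1/2-H}$, of $\si^{-1}$, and of $\zeta$ (the paper's $I_1,\dots,I_4$), the increment $\zeta_s-\zeta_r$ is unpacked exactly as you describe via \textsc{\textbf{(H1')}}, \textsc{\textbf{(H2)}}, Lemma \ref{FoLD}, the H\"older seminorms of $\varrho^{\mu},\varrho^{\nu}$ and of $\int_0^\cdot\si_t\d B_t^H$, and the case $H\in(0,1/2)$ reduces to the fractional integral bound with no increment terms. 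The only detail worth noting is that the conditional expectation $\E^{\ti H,0}$ must still integrate the $B^H$-dependent seminorms $\|\int_0^\cdot\si_t\d B^H_t\|_\infty$ and $\|\int_0^\cdot\si_t\d B^H_t\|_{H-\varsigma_3}$ arising from the increments of $X^\mu$ (finite by Fernique's theorem), which is why these do not appear in $\vartheta(H)$; otherwise your bookkeeping matches the paper's.
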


\begin{proof}

We start by dealing with the case $H\in(1/2,1)$
By \eqref{InOp} and \eqref{FrDe}, we get
\beg{align}\label{1PfGirsT}
&K_H^{-1}\left(\int_0^\cdot\si^{-1}_r\zeta_r\d r\right)(s)
=s^{H-\frac{1}{2}}D^{H-\frac{1}{2}}_{0+}\left[\cdot^{\frac{1}{2}-H}\si^{-1}_\cdot\zeta_\cdot\right](s)\cr
=&\frac{H-\ff 1 2}{\Gamma(\frac{3}{2}-H)}
\Bigg[\ff {s^{\frac{1}{2}-H}\si^{-1}_s\zeta_s}{H-\ff 1 2}
+s^{H-\frac{1}{2}}\si^{-1}_s\zeta_s\int_0^s\frac{s^{\frac{1}{2}-H}-r^{\frac{1}{2}-H}}{(s-r)^{\frac{1}{2}+H}}
\d r\cr
&\qquad\qquad\quad+s^{H-\frac{1}{2}}\zeta_s\int_0^s\frac{\si^{-1}_s-\si^{-1}_r}{(s-r)^{\frac{1}{2}+H}}r^{\ff 1 2-H}\d r\cr
&\qquad\qquad\quad+s^{H-\frac{1}{2}}\int_0^s\frac{\zeta_s-\zeta_r}{(s-r)^{\frac{1}{2}+H}}\si^{-1}_rr^{\ff 1 2-H}\d r\Bigg]\cr
=:&\frac{H-\ff 1 2}{\Gamma(\frac{3}{2}-H)}[I_1(s)+I_2(s)+I_3(s)+I_4(s)].
\end{align}
From \textsc{\textbf{(H1)}}, \eqref{3PfTh(Har)} and Theorem \ref{WP}, it follows that
\beg{align}\label{2PfGirsT}
|\zeta_s|\leq&\ff{\kappa_s(t_0-s)+1}{t_0}|X_0^\mu-X_0^\nu|+\ff{\kappa_s s+1}{t_0}|\varrho^\mu_{t_0}-\varrho^\nu_{t_0}|\cr
&+\kappa_s\left(|\varrho^\mu_s-\varrho^\nu_s|+C_{T,\ti H}\W_p(\mu,\nu)\right).
\end{align}
Besides, we have
\beg{align*}
\int_0^s\frac{r^{\frac{1}{2}-H}-s^{\frac{1}{2}-H}}{(s-r)^{\frac{1}{2}+H}}\d r
=s^{1-2H}\int_0^1\frac{r^{\frac{1}{2}-H}-1}{(1-r)^{\frac{1}{2}+H}}\d r<\infty.
\end{align*}
These, along with  \textbf{(H2)}(ii), lead to
\beg{align*}
\sum_{i=1}^3|I_i(s)|^2\leq& C_{T,\ti\kappa,H}(s^{1-2H}+s^{2\delta-2H+1})|\zeta_s|^2\cr
\leq&C_{T,\kappa,\ti\kappa,H,\ti H}(s^{1-2H}+s^{2\delta-2H+1})
\left(\ff {\psi^2(X_0,\varrho)} {t_0^2}+\W_p(\mu,\nu)^2\right),
\end{align*}
where we put $\psi(X_0,\varrho):=|X_0^\mu-X_0^\nu|+\sup_{s\in[0,t_0]}|\varrho^\mu_s-\varrho^\nu_s|$ for simplicity.\\
Then, we get
\beg{align}\label{3PfGirsT}
\sum_{i=1}^3\int_0^{t_0}|I_i(s)|^2\d s
%\leq&C_{\th,T,H,\ti H}(t_0^{2(1-H)}+t_0^{2(\gamma-H+1)})\cr
%\times\left[\ff 1 {t_0^2}(|X_0^\mu-X_0^\nu|^2+|\varrho^\mu_{t_0}-\varrho^\nu_{t_0}|^2)+\sup_{s\in[0,t_0]}|\varrho^\mu_s-\varrho^\nu_s|^2+\W_\th(\mu,\nu)^2\right]\cr
\leq C_{T,\kappa,\ti\kappa,H,\ti H}\left(\ff {\psi^2(X_0,\varrho)} {t_0^{2H}}+\W_p(\mu,\nu)^2\right).
\end{align}
As for $I_4$, using \textbf{(H1')} and Lemma \ref{FoLD}, we deduce that for every $s\in[0,T]$,
\beg{align*}
&b_s(Y_s,P_s^*\nu)-b_s(X_s^\mu,P_s^*\mu)\cr
=&\int_0^1\ff d{\d\th}b_s(X_s^\mu+\th(Y_s-X_s^\mu),P_s^*\nu)\d\th+\int_0^1\ff d{\d\th}b_s(X_s^\mu,\sL_{X_s^\mu+\th(X_s^\nu-X_s^\mu)})\d\th\cr
=&\int_0^1\nabla b_s(\cdot,P_s^*\nu)(X_s^\mu+\th(Y_s-X_s^\mu))(Y_s-X_s^\mu)\d\th\cr
&+\int_0^1\left(\E\langle D^Lb_s(x,\cdot)(\sL_{X_s^{\mu,\nu}(\th)})(X_s^{\mu,\nu}(\th)),X_s^\nu-X_s^\mu\rangle\right)|_{x=X_s^\mu}\d\th,
\end{align*}
where for any $\th\in[0,1],X_s^{\mu,\nu}(\th):=X_s^\mu+\th(X_s^\nu-X_s^\mu)$.\\
Then by  \textbf{(H1')}, \textbf{(H2)}(i) and \eqref{3PfTh(Har)}, we have
\beg{align*}
&|\zeta_s-\zeta_r|=\left|b_s(Y_s,P_s^*\nu)-b_s(X_s^\mu,P_s^*\mu)-(b_r(Y_r,P_r^*\nu)-b_r(X_r^\mu,P_r^*\mu))\right|\cr
%\leq&\Bigg[(s-r)^\alpha+\ff{s-r}{t_0}+\ff{(s-r)^\beta}{t_0^\beta}\left(|X_0^\mu-X_0^\nu|^\beta+|\varrho^\mu_{t_0}-\varrho^\nu_{t_0}|^\beta\right)
%+\W_2(P_s^*\nu,P_r^*\nu)\cr
%&\quad+|\varrho^\nu_s-\varrho^\mu_s-(\varrho^\nu_r-\varrho^\mu_r)|^\beta+|X_s^\mu-X_r^\mu|^\beta\Bigg]\bigg(|X_0^\mu-X_0^\nu|+\sup_{s\in[0,t_0]}|\varrho^\nu_s-\varrho^\mu_s|\bigg)\cr
%&+|\varrho^\nu_s-\varrho^\mu_s-(\varrho^\nu_r-\varrho^\mu_r|\cr
%&+\left[(s-r)^\alpha+(\E|X_s^\mu-X_r^\mu|^2)^\ff 1 2+(\E|X_s^\nu-X_r^\nu|^2)^\ff 1 2+|X_s^\mu-X_r^\mu|^\beta\right]\E|X_s^\nu-X_s^\mu|\cr
%&+\E[(|X_s^\mu-X_r^\mu|+|X_s^\nu-X_r^\nu|)^\gamma\cdot|X_s^\nu-X_s^\mu|]\cr
%&+(\E|(X_s^\mu-X_r^\mu)-(X_s^\nu-X_r^\nu)|)\cr
\leq&C_{T,\kappa,\ti\kappa,H,\ti H}\Bigg\{\Bigg[(s-r)^\alpha+\ff{s-r}{t_0}+\W_p(P_s^*\nu,P_r^*\nu)+\ff{(s-r)^\beta}{t_0^\beta}\psi^\beta(X_0,\varrho)
\cr
&\qquad\qquad\qquad+|\varrho^\nu_s-\varrho^\mu_s-(\varrho^\nu_r-\varrho^\mu_r)|^\beta+|X_s^\mu-X_r^\mu|^\beta\Bigg]\psi(X_0,\varrho)\cr
&\qquad\qquad\quad+\Big[(s-r)^\alpha+(\E|X_s^\mu-X_r^\mu|^p)^\ff \gamma p+(\E|X_s^\nu-X_r^\nu|^p)^\ff \gamma p\cr
&\qquad\qquad\qquad\quad+(\E|X_s^\mu-X_r^\mu|^p)^\ff 1 p+(\E|X_s^\nu-X_r^\nu|^p)^\ff 1 p+|X_s^\mu-X_r^\mu|^\beta\Big](\E|X_s^\mu-X_s^\nu|^p)^\ff 1 p\cr
&\qquad\quad\qquad+|\varrho^\nu_s-\varrho^\mu_s-(\varrho^\nu_r-\varrho^\mu_r)|+\E|(X_s^\mu-X_r^\mu)-(X_s^\nu-X_r^\nu)|\Bigg\}.
\end{align*}
Following respectively the same arguments as Theorem \ref{WP}, we derive that for any $s,r\in[0,T]$ and $\mu,\nu\in\sP_p(\R^d)$ with $p>\max\{1/H,1/\ti H\}$,
\beg{align*}
\E|X_s^\mu-X_r^\mu|^p\leq C_{T,\kappa,\ti\kappa,H,\ti H}|s-r|^{p(H\wedge\ti H)}
\end{align*}
and
\beg{align*}
\E|(X_s^\mu-X_r^\mu)-(X_s^\nu-X_r^\nu)|^p\leq  C_{T,\kappa,\ti\kappa,\ti H}(s-r)^{p\ti H}\W_p(\mu,\nu)^p.
\end{align*}
Consequently, combining these with \eqref{2PfEst} leads to
%\beg{align*}
%&|I_4(s)|\cr
%\leq& C_{T,\kappa,\ti\kappa,H,\ti H}s^{H-\frac{1}{2}}\bigg[s^{\alpha-2H+1}+s^{\gamma H-2H+1}
%+\int_0^s\frac{|\varrho^\nu_s-\varrho^\mu_s-(\varrho^\nu_r-\varrho^\mu_r)|+|X_s^\mu-X_r^\mu|^\beta}{(s-r)^{\frac{1}{2}+H}}r^{\ff 1 2-H}\d r\cr
%&+(s^{\alpha-2H+1}+s^{1-H}+\ff{s^{2(1-H)}} {t_0}+\ff{s^{\beta-2H+1}} {t_0^\beta}\left(|X_0^\mu-X_0^\nu|^\beta+|\varrho^\mu_{t_0}-\varrho^\nu_{t_0}|^\beta\right))\bigg(|X_0^\mu-X_0^\nu|+\sup_{s\in[0,t_0]}|\varrho^\nu_s-\varrho^\mu_s|\bigg)
%\bigg]\cr
%\leq& C_{T,\kappa,\ti\kappa,H,\ti H}\Bigg[s^{\alpha-H+\ff 1 2}+s^{\gamma H-H+\ff 1 2}
%+\left(s^{\alpha-H+\frac{1}{2}}+s^\frac{1}{2}+\ff{s^{\frac{3}{2}-H}} {t_0}+\ff{s^{\beta-H+\frac{1}{2}}} {t_0^\beta}\psi^\beta(X_0,\varrho)\right)\psi(X_0,\varrho)
%\cr
%&\qquad\qquad\quad+s^{H-\frac{1}{2}}\int_0^s\frac{|\varrho^\nu_s-\varrho^\mu_s-(\varrho^\nu_r-\varrho^\mu_r)|}{(s-r)^{\frac{1}{2}+H}}r^{\ff 1 2-H}\d r\cr
%&\qquad\qquad\quad+\psi(X_0,\varrho)s^{H-\frac{1}{2}}\int_0^s\frac{|\varrho^\nu_s-\varrho^\mu_s-(\varrho^\nu_r-\varrho^\mu_r)|^\beta}{(s-r)^{\frac{1}{2}+H}}r^{\ff 1 2-H}\d r\cr
%&\qquad\qquad\quad+(1+\psi(X_0,\varrho))s^{H-\frac{1}{2}}\int_0^s\frac{|X_s^\mu-X_r^\mu|^\beta}{(s-r)^{\frac{1}{2}+H}}r^{\ff 1 2-H}\d r\Bigg].
%\end{align*}
\beg{align}\label{4PfGirsT}
&|I_4(s)|^2\cr
\leq& C_{T,\kappa,\ti\kappa,H,\ti H}\Bigg[\left(s^{2(\alpha-H)+1}+s^{2(H\wedge\ti H-H)+1}+s^{2((H\wedge\ti H)\gamma-H)+1}\right)\W_p(\mu,\nu)^2\cr
&\qquad\qquad\quad+\left(s^{2(\alpha-H)+1}+s^{2(H\wedge\ti H-H)+1}+\ff{s^{3-2H}} {t_0^2}+\ff{s^{2(\beta-H)+1}} {t_0^{2\beta}}\psi^{2\beta}(X_0,\varrho)\right)\psi^2(X_0,\varrho)
\cr
&\qquad\qquad\quad+s^{2H-1}\left(\int_0^s\frac{|\varrho^\nu_s-\varrho^\mu_s-(\varrho^\nu_r-\varrho^\mu_r)|}{(s-r)^{\frac{1}{2}+H}}r^{\ff 1 2-H}\d r\right)^2\cr
&\qquad\qquad\quad+s^{2H-1}\left(\int_0^s\frac{|\varrho^\nu_s-\varrho^\mu_s-(\varrho^\nu_r-\varrho^\mu_r)|^\beta}{(s-r)^{\frac{1}{2}+H}}r^{\ff 1 2-H}\d r\right)^2\psi^2(X_0,\varrho)\cr
&\qquad\qquad\quad+s^{2H-1}\left(\int_0^s\frac{|X_s^\mu-X_r^\mu|^\beta}{(s-r)^{\frac{1}{2}+H}}r^{\ff 1 2-H}\d r\right)^2(\W_p(\mu,\nu)^2+\psi^2(X_0,\varrho))\Bigg]\cr
\leq& C_{T,\kappa,\ti\kappa,H,\ti H}\Bigg[\left(s^{2(\alpha-H)+1}+s^{2(H\wedge\ti H-H)+1}+s^{2((H\wedge\ti H)\gamma-H)+1}\right)\W_p(\mu,\nu)^2\cr
&\qquad\qquad\quad+\bigg(s^{2(\alpha-H)+1}+s^{2(H\wedge\ti H-H)+1}+\ff{s^{3-2H}} {t_0^2}+\|\varrho^\mu_\cdot\|^{2\be}_{\ti H-\varsigma_1}s^{1+2(\ti H-\varsigma_1)\be-2\ti H}\cr
&\qquad\qquad\qquad\quad+\|\varrho^\nu_\cdot\|^{2\be}_{\ti H-\varsigma_2}s^{1+2(\ti H-\varsigma_2)\be-2\ti H}
+\ff{s^{2(\beta-H)+1}} {t_0^{2\beta}}\psi^{2\beta}(X_0,\varrho)\bigg)\psi^2(X_0,\varrho)\nonumber\\
&\qquad\qquad\quad+s^{2H-1}\left(\int_0^s\frac{|\varrho^\nu_s-\varrho^\mu_s-(\varrho^\nu_r-\varrho^\mu_r)|}{(s-r)^{\frac{1}{2}+H}}r^{\ff 1 2-H}\d r\right)^2\nonumber\\
&\qquad\qquad\quad+s^{2H-1}\left(\int_0^s\frac{|X_s^\mu-X_r^\mu|^\beta}{(s-r)^{\frac{1}{2}+H}}r^{\ff 1 2-H}\d r\right)^2(\W_p(\mu,\nu)^2+\psi^2(X_0,\varrho))\Bigg],
\end{align}
where the last inequality is due to the H\"{o}lder continuity of $\varrho^\mu_\cdot$ and $\varrho^\nu_\cdot$ of order $\ti H-\varsigma_1$ and  $\ti H-\varsigma_2$ with $\varsigma_i\in(0,1/2),i=1,2$, respectively.\\
Observe that there hold
\beg{align*}
\sup_{t\in[0,T]}|X_t^\mu|\le& C_{T,\kappa,H,\ti H}\left(1+(\E|X_0^\mu|^p)^\ff 1 p+|X_0^\mu|+\left\|\int_0^\cdot\si_t\d B^H_t\right\|_\infty+\left\|\varrho^\mu_\cdot\right\|_\infty\right)\cr
=:&C_{T,\kappa,H,\ti H}\Upsilon_\mu
\end{align*}
and
\beg{align*}
&\E\bigg(\sup_{t\in[0,T]}|X_t^\mu|^p\bigg)\le C_{T,\kappa,H,\ti H}\left(1+\E|X_0^\mu|^p\right).
\end{align*}
Then, by \textsc{\textbf{(H1')}} we get
\beg{align*}
\left|\int_r^sb_t(X_t^{\mu},\sL_{X_t^\mu})\d t\right|
\le& \kappa(T)\left[1+\sup_{t\in[0,T]}|X_t^{\mu}|+\Big(\E\sup_{t\in[0,T]}|X_t^{\mu}|^2\Big)^\ff1 2\right](s-r)\cr
\le& C_{T,\kappa,H,\ti H}\Upsilon_\mu(s-r).
\end{align*}
As a consequence, we obtain
\beg{align*}
&s^{2H-1}\left(\int_0^s\frac{|X_s^\mu-X_r^\mu|^\beta}{(s-r)^{\frac{1}{2}+H}}r^{\ff 1 2-H}\d r\right)^2\cr
\le&3s^{2H-1}\bigg(\int_0^s\ff {|\int_r^sb_t(X_t^\mu,\sL_{X_t^\mu})\d t|^{\be}}{(s-r)^{\ff 1 2+H}}r^{\ff 1 2-H}\d r\bigg)^2
+3s^{2H-1}\bigg(\int_0^s\ff {|\int_r^s\si_t\d B^H_t|^{\be}}{(s-r)^{\ff 1 2+H}}r^{\ff 1 2-H}\d r\bigg)^2\nonumber\\
&+3s^{2H-1}\bigg(\int_0^s\ff {|\varrho^\mu_s-\varrho^\mu_r|^{\be}}{(s-r)^{\ff 1 2+H}}r^{\ff 1 2-H}\d r\bigg)^2\nonumber\\
\le& C_{T,\kappa,H,\ti H}\Upsilon_\mu^{2\be}s^{1+2(\be-H)}
+C_H\left\|\int_0^\cdot\si_t\d B^H_t\right\|^{2\be}_{H-\varsigma_3}s^{1+2(H-\varsigma_3)\be-2H}\cr
&+C_{\ti H}\left\|\varrho^\mu_\cdot\right\|^{2\be}_{\ti H-\varsigma_1}s^{1+2(\ti H-\varsigma_1)\be-2\ti H}.
\end{align*}
Here we have used the H\"{o}lder continuity of $\int_0^\cdot\si_t\d B^H_t$ of order $H-\varsigma_3$ with $\varsigma_3\in(0,1/2)$.\\
Substituting this into \eqref{4PfGirsT} and integrating on the interval $[0,t_0]$ yields
\beg{align}\label{5PfGirsT}
&\int_0^{t_0}|I_4(s)|^2\d s\cr
\leq& C_{T,\kappa,\ti\kappa,H,\ti H}\Bigg[\left(t_0^{2(\alpha-H+1)}+t_0^{2(H\wedge\ti H-H+1)}+t_0^{2((H\wedge\ti H)\gamma-H+1)}\right)\W_p(\mu,\nu)^2\cr
&\qquad\qquad\quad+\bigg(t_0^{2(\alpha-H+1)}+t_0^{2(H\wedge\ti H-H+1)}+t_0^{2(1-H)}+\|\varrho^\mu_\cdot\|^{2\be}_{\ti H-\varsigma_1}t_0^{2(1+(\ti H-\varsigma_1)\be-\ti H)}\cr
&\qquad\qquad\qquad\quad+\|\varrho^\nu_\cdot\|^{2\be}_{\ti H-\varsigma_2}t_0^{2(1+(\ti H-\varsigma_2)\be-\ti H)}+t_0^{2(1-H)}\psi^{2\beta}(X_0,\varrho)\bigg)\psi^2(X_0,\varrho)\cr
&\qquad\qquad\quad+\int_0^{t_0}s^{2H-1}\left(\int_0^s\frac{|\varrho^\nu_s-\varrho^\mu_s-(\varrho^\nu_r-\varrho^\mu_r)|}{(s-r)^{\frac{1}{2}+H}}r^{\ff 1 2-H}\d r\right)^2\d s\cr
&\qquad\qquad\quad+\Bigg(\Upsilon_\mu^{2\be}t_0^{2(1+\be-H)}
+\left\|\int_0^\cdot\si_t\d B^H_t\right\|^{2\be}_{H-\varsigma_3}t_0^{2(1+(H-\varsigma_3)\be-H)}\cr
&\qquad\qquad\qquad\quad+\left\|\varrho^\mu_\cdot\right\|^{2\be}_{\ti H-\varsigma_1}t_0^{2(1+(\ti H-\varsigma_1)\be-\ti H)}\Bigg)(\W_p(\mu,\nu)^2+\psi^2(X_0,\varrho))\Bigg].
%\leq&C_{T,\kappa,\ti\kappa,H,\ti H}\Bigg[1+\|\varrho^\mu_\cdot\|^{2}_{\ti H-\varsigma_1}+\|\varrho^\nu_\cdot\|^{2}_{\ti H-\varsigma_2}+
%\left(1+\|\varrho^\mu_\cdot\|^{2\be}_{\ti H-\varsigma_1}+\|\varrho^\nu_\cdot\|^{2\be}_{\ti H-\varsigma_2}+\psi^{2\beta}(X_0,\varrho)\right)\psi^2(X_0,\varrho)\cr
%&\qquad\qquad\quad+(1+\psi^2(X_0,\varrho))\left(\Upsilon_\mu^{2\be}+\left\|\int_0^\cdot\si_t\d B^H_t\right\|^{2\be}_{H-\varsigma_3}+\left\|\varrho^\mu_\cdot\right\|^{2\be}_{\ti H-\varsigma_1}\right)\Bigg].
\end{align}
This, together with \eqref{3PfGirsT} and \eqref{1PfGirsT}, implies
\beg{align*}
&\E^{\ti H,0}\int_0^{t_0}\left|K_H^{-1}\left(\int_0^\cdot\si^{-1}_r\zeta_r\d r\right)(s)\right|^2\d s\cr
\leq& C_{T,\kappa,\ti\kappa,H,\ti H}\Bigg[
\left(1+t_0^{2(\alpha-H+1)}+t_0^{2(H\wedge\ti H-H+1)}+t_0^{2((H\wedge\ti H)\gamma-H+1)}\right)\W_p(\mu,\nu)^2\cr
&\qquad\qquad\quad+\bigg(\ff 1 {t_0^{2H}}+t_0^{2(\alpha-H+1)}+t_0^{2(H\wedge\ti H-H+1)}+t_0^{2(1-H)}+\|\varrho^\mu_\cdot\|^{2\be}_{\ti H-\varsigma_1}t_0^{2(1+(\ti H-\varsigma_1)\be-\ti H)}\cr
&\qquad\qquad\qquad\quad+\|\varrho^\nu_\cdot\|^{2\be}_{\ti H-\varsigma_2}t_0^{2(1+(\ti H-\varsigma_2)\be-\ti H)}+t_0^{2(1-H)}\psi^{2\beta}(X_0,\varrho)\bigg)\psi^2(X_0,\varrho)\cr
&\qquad\qquad\quad+\int_0^{t_0}s^{2H-1}\left(\int_0^s\frac{|\varrho^\nu_s-\varrho^\mu_s-(\varrho^\nu_r-\varrho^\mu_r)|}{(s-r)^{\frac{1}{2}+H}}r^{\ff 1 2-H}\d r\right)^2\d s\cr
&\qquad\qquad\quad+\Bigg(\left(1+|X_0^\mu|^{2\be}+\|\varrho^\mu_\cdot\|_\infty^{2\be}\right)t_0^{2(1+\be-H)}
+t_0^{2(1+(H-\varsigma_3)\be-H)}\cr
&\qquad\qquad\qquad\quad+\left\|\varrho^\mu_\cdot\right\|^{2\be}_{\ti H-\varsigma_1}t_0^{2(1+(\ti H-\varsigma_1)\be-\ti H)}\Bigg)(\W_p(\mu,\nu)^2+\psi^2(X_0,\varrho))\Bigg]\cr
\leq&C_{T,\kappa,\ti\kappa,H,\ti H}\Bigg[\W_p(\mu,\nu)^2+\bigg(\ff 1 {t_0^{2H}}+\|\varrho^\mu_\cdot\|^{2\be}_{\ti H-\varsigma_1}+
\|\varrho^\nu_\cdot\|^{2\be}_{\ti H-\varsigma_2}+\psi^{2\beta}(X_0,\varrho)\bigg)\psi^2(X_0,\varrho)\cr
&\qquad\qquad\quad+\left(1+|X_0^\mu|^{2\be}+\|\varrho^\mu_\cdot\|_\infty^{2\be}+\left\|\varrho^\mu_\cdot\right\|^{2\be}_{\ti H-\varsigma_1}\right)(\W_p(\mu,\nu)^2+\psi^2(X_0,\varrho))\cr
&\qquad\qquad\quad+\int_0^{t_0}s^{2H-1}\left(\int_0^s\frac{|\varrho^\nu_s-\varrho^\mu_s-(\varrho^\nu_r-\varrho^\mu_r)|}{(s-r)^{\frac{1}{2}+H}}r^{\ff 1 2-H}\d r\right)^2\d s\Bigg]\cr
\end{align*}
Then we get the desired claim.

We now move on to the case $H\in(0,1/2)$.
According to \eqref{-HOF} and \eqref{FrIn}, we get
\beg{align}\label{6PfGirsT}
&\left|K_H^{-1}\left(\int_0^\cdot\si^{-1}_r\zeta_r\d r\right)(s)\right|
=\left|s^{H-\frac{1}{2}}I^{\frac{1}{2}-H}_{0+}
\left[\cdot^{\frac{1}{2}-H}\si^{-1}\zeta_\cdot\right](s)\right|\cr
=&\left|\frac{\si^{-1}s^{H-\ff 1 2}}{\Gamma(\frac{1}{2}-H)}\int_0^s\frac{r^{\ff 1 2-H}\zeta_r}{(s-r)^{\frac{1}{2}+H}}\d r\right|\cr
\leq&C_{T,\kappa,H,\ti H}s^{\ff 1 2-H}\left(\ff {\psi(X_0,\varrho)} {t_0}+\W_p(\mu,\nu)\right),
\end{align}
where the last inequality is due to \eqref{2PfGirsT}.\\
Then, we obtain
\beg{align*}
\E^{\ti H,0}\int_0^{t_0}\left|K_H^{-1}\left(\int_0^\cdot\si^{-1}_r\zeta_r\d r\right)(s)\right|^2\d s\leq C_{T,\kappa,H,\ti H}\left(\ff {\psi^2(X_0,\varrho)} {t_0^{2H}}+\W_p(\mu,\nu)^2\right),
\end{align*}
which is the desired relation.
Our proof is now complete.
\end{proof}

\begin{rem}\label{Re(GirsT)}
(i) With the help of the Fernique theorem (see, e.g., \cite[Theorem 1.3.2]{Fe75} or \cite[Lemma 8]{Sa12}), by \eqref{3PfGirsT}, \eqref{5PfGirsT} and \eqref{6PfGirsT} we can conclude that
\beg{align*}
\E^{\ti H,0}\left(\exp\left\{\ff 1 2\int_0^{t_0}\left|K_H^{-1}\left(\int_0^\cdot\si^{-1}_r\zeta_r\d r\right)(s)\right|^2\d s\right\}\right)<\infty.
\end{align*}
(ii) Under the assumptions in Lemma \ref{GirsT}, we have
\begin{equation}\nonumber
\E\vartheta(H)\leq
\left\{
\begin{array}{ll}\vspace{0.3cm}
 C_{T,\kappa,\ti\kappa,H,\ti H}\left(1+\W_p(\mu,\nu)^{2\be}+\ff 1 {t_0^{2H}}\right)\W_p(\mu,\nu)^2,\ \ \ \ \ \ \ H\in(1/2,1),\\
 C_{T,\kappa,H,\ti H}\left(1+\ff 1{t_0^{2H}}\right)\W_p(\mu,\nu)^2,\ \ \ \ \ \ \ \ \ \ \ \ \ \ \ \ \ \ \ \ \ \ \ \ \ \ H\in(0,1/2).
\end{array} \right.
\end{equation}
Indeed, first observe that by Remark \ref{Re(Est)} and \eqref{1PfTh(Har)}, we derive that for any $1\leq q\leq p$,
\beg{align}\label{1Re(GirsT)}
\E\psi^q(X_0,\varrho)\leq C_{T,\kappa,\ti H}\left(1+{t_0}^{q\ti H}\right)\W_p(\mu,\nu)^q\leq C_{T,\kappa,\ti H}\W_p(\mu,\nu)^q.
\end{align}
If $H\in(0,1/2)$, then it is easy to see that for any $p\geq 2$,
\beg{align*}
\E\vartheta(H)\leq C_{T,\kappa,H,\ti H}\left(1+\ff 1{t_0^{2H}}\right)\W_p(\mu,\nu)^2.
\end{align*}
If $H\in(1/2,1)$, using the same lines as in Remark \eqref{Re(Est)} in the second inequality leads to
\beg{align}\label{2Re(GirsT)}
&\E\int_0^{t_0}s^{2H-1}\left(\int_0^s\frac{|\varrho^\nu_s-\varrho^\mu_s-(\varrho^\nu_r-\varrho^\mu_r)|}{(s-r)^{\frac{1}{2}+H}}r^{\ff 1 2-H}\d r\right)^2\d s\cr
\leq&\int_0^{t_0}s^{2H-1}\left(\int_0^s\ff{r^{1-2H}}{(s-r)^{1+2H-2\lambda_0}}\d r\right)\cdot\left(\int_0^s\ff{\E|\varrho^\nu_s-\varrho^\mu_s-(\varrho^\nu_r-\varrho^\mu_r)|^2}{(s-r)^{2\lambda_0}}\d r\right)\d s\cr
\leq&C_{\lambda_0,T,\kappa,H,\ti H}\int_0^{t_0}s^{2(\lambda_0-H)}\left(\int_0^s(s-r)^{2(\ti H-\lambda_0)}\d r\right)\d s\cdot\W_p(\mu,\nu)^2\cr
\leq&C_{\lambda_0,T,\kappa,H,\ti H}\W_p(\mu,\nu)^2,
\end{align}
where we take $\lambda_0$ such that $H<\lambda_0<\ti H+1/2$ and remark that $C_{\lambda_0,T,\kappa,H,\ti H}$ above may depend only on $T,\kappa,H,\ti H$ by choosing proper $\lambda_0$.\\
Then, by \eqref{1Re(GirsT)} and \eqref{2Re(GirsT)} one can verify that for any $p\geq 2(1+\be)$,
\beg{align*}
\E\vartheta(H)\leq C_{T,\kappa,\ti\kappa,H,\ti H}\left(1+\W_p(\mu,\nu)^{2\be}+\ff 1 {t_0^{2H}}\right)\W_p(\mu,\nu)^2.
\end{align*}
%the fact that $\W_{\ti p}(\mu,\nu)\leq\W_{\ti q}(\mu,\nu)$ for $1\leq \ti p\leq\ti q$
\end{rem}

\subsubsection{Bismut formula}

In this part, we focus on establishing a Bismut formula for the $L$-derivative of \eqref{GeEq}.
That is, for every $t\in(0,T],\mu\in\sP_p(\R^d)$ and $\phi\in L^p(\R^d\ra\R^d,\mu)$, we are to find an integrable random variable $M_t(\mu,\phi)$ such that
\beg{align*}
D^L_\phi(P_t f)(\mu)=\E\left(f(X_t^\mu)M_t(\mu,\phi)\right),\ \ f\in\sB_b(\R^d).
\end{align*}
Recall that for any $\mu\in\sP_p(\R^d)$, let $(X_t^\mu)_{t\in[0,T]}$ is the solution to \eqref{GeEq} with $\sL_{X_0^\mu}=\mu$ and $P^*_t\mu=\sL_{X_t^\mu}$ for every $t\in[0,T]$. For any $\ve\in[0,1]$ and  $\phi\in L^p(\R^d\ra\R^d,\mu)$, let $X_t^{\mu_{\ve,\phi}}$ denote the solution of \eqref{GeEq} with $X_0^{\mu_{\varepsilon,\phi}}=(\mathrm{Id}+\varepsilon\phi)(X_0^\mu)$.
In order to ease notations, we simply write $\mu_{\varepsilon,\phi}=\sL_{(\mathrm{Id}+\varepsilon\phi)(X_0^\mu)}$.

Next, we first consider the spatial derivative of $X_t^\mu$ along $\phi$:
\beg{align*}
 \na_\phi X_t^\mu:=\lim\limits_{\ve\ra0}\ff {X_t^{\mu_{\ve,\phi}}-X_t^\mu}\ve,\ \ t\in[0,T],\ \phi\in L^p(\R^d\ra\R^d,\mu).
\end{align*}
To this end, we impose the following assumption.
\beg{enumerate}
\item[\textbf{(H3)}] There exists a non-decreasing function $\kappa_\cdot$ such that
\beg{align*}
|D^L\ti\si_t(\mu)(x)|\le \kappa_t,\ \ t\in[0,T],\ x\in\R^d, \mu\in\sP_p(\R^d).
\end{align*}
\end{enumerate}

\begin{lem}\label{PD-GT}
Assume that \textsc{\textbf{(H1')}}, \textsc{\textbf{(H3)}} hold and $\si_t$ does not depend on $t$ if $H\in(0,1/2)$.
For any $\mu\in\sP_p(\R^d)$ and $\phi\in L^p(\R^d\ra\R^d,\mu)$ with
$p>\max\{1/H,1/\ti H\}$ if $H\in(1/2,1)$ or $p>1/\ti H$ if $H\in(0,1/2) $,
then the following assertions hold. \\
(i) $\na_\phi X_\cdot^\mu$ exists in $L^p(\Omega\ra C([0,T];\R^d),\P)$
such that $\na_\phi X_\cdot^\mu$ is the unique solution of the following linear SDE
\beg{align}\label{ParV-0}
\d G^\phi_t=&\left[\na_{G^\phi_t}b_t(\cdot,\sL_{X_t^\mu})(X_t^\mu)+\left(\E\<D^Lb_t(y,\cdot)(\sL_{X_t^\mu})(X_t^\mu), G^\phi_t\>\right)|_{y=X_t^\mu}\right]\d t\cr
&+\E\<D^L\ti\si_t(\sL_{X_t^\mu})(X_t^\mu), G^\phi_t\>\d\ti B^{\ti H}_t,\ \ G^\phi_0=\phi(X_0^\mu),
\end{align}
and
\beg{align*}
\E\bigg(\sup_{t\in[0,T]}|\na_\phi X_t^\mu|^p\bigg)\leq  C_{p,T,\kappa,H,\ti H}\|\phi\|^p_{L^p(\mu)}.
\end{align*}
(ii) It holds
\beg{align*}
\lim\limits_{\ve\downarrow0}\E\bigg(\sup_{s\in[0,t]}\left|\ff{\varrho_s^{\mu_{\ve,\phi}}-\varrho^\mu_s}\ve-\Lambda_s\right|^p\bigg)=0,
\end{align*}
where $\Lambda_\cdot$ is defined as
\beg{align*}
\Lambda_s:=\int_0^s\left\langle\E[\langle D^L\ti\si_r(P_r^*\mu)(X_r^\mu),\nabla_\phi X_r^\mu\rangle],\d\ti B^{\ti H}_r\right\rangle,\ \ s\in[0,T].
\end{align*}
\end{lem}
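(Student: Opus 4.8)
The plan is to prove parts (i) and (ii) together via a standard but careful Picard/fixed-point argument for the derivative flow, exploiting the linear structure of the target equation \eqref{ParV-0} and the Wiener-integral interpretation of the $\ti B^{\ti H}$-terms (Remark \ref{RemS}, extended to this setting since the integrand in \eqref{ParV-0} is, after conditioning, a deterministic function once $\sL_{X_\cdot^\mu}$ and $X_\cdot^\mu$ are frozen — more precisely, the integrand $\E\<D^L\ti\si_r(P_r^*\mu)(X_r^\mu),G^\phi_r\>$ is deterministic). First I would establish well-posedness of the linear equation \eqref{ParV-0} in $L^p(\Omega\ra C([0,T];\R^d),\P)$: writing $G^\phi$ as the sum of a drift integral and a fractional Wiener integral, apply \textsc{\textbf{(H1')}}, \textsc{\textbf{(H3)}} together with the moment estimate \cite[(3.8) in the proof of Theorem 3.1]{FHSY} for $\ti H$-fractional Wiener integrals and, when $H\in(1/2,1)$, the analogous estimate for the $B^H$-part is not needed because no $B^H$ appears in \eqref{ParV-0}; then a Gronwall argument in the norm $\E\sup_{s\le t}|\cdot|^p$ gives both existence-uniqueness and the bound $\E(\sup_{t\in[0,T]}|\na_\phi X_t^\mu|^p)\le C\|\phi\|_{L^p(\mu)}^p$. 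The condition $p>\max\{1/H,1/\ti H\}$ (resp. $p>1/\ti H$) is exactly what makes these fractional-integral moment estimates valid.

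Next I would identify the limit $\na_\phi X_\cdot^\mu:=\lim_{\ve\to0}(X_\cdot^{\mu_{\ve,\phi}}-X_\cdot^\mu)/\ve$ with the solution $G^\phi$ of \eqref{ParV-0}. Set $\eta_t^\ve:=(X_t^{\mu_{\ve,\phi}}-X_t^\mu)/\ve$. Subtracting the equations for $X^{\mu_{\ve,\phi}}$ and $X^\mu$ and dividing by $\ve$, one writes the drift difference via the fundamental theorem of calculus along the segment $X_t^\mu+\th(X_t^{\mu_{\ve,\phi}}-X_t^\mu)$ and along the measure segment, using Lemma \ref{FoLD} for the $L$-derivative part exactly as in the computation preceding \eqref{4PfGirsT}; similarly for the $\ti\si$-difference using \textsc{\textbf{(H3)}}. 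This expresses $\eta^\ve$ as a perturbation of the linear equation \eqref{ParV-0} with coefficients $\na b_t(\cdot,\sL_{X_t^\mu})(X_t^\mu+\th\ve\eta_t^\ve)$ etc. plus error terms that are controlled by the continuity moduli in \textsc{\textbf{(H1')}}–\textsc{\textbf{(H3)}} and the convergence $\W_p(\mu_{\ve,\phi},\mu)\to0$ and $\E\sup_t|X_t^{\mu_{\ve,\phi}}-X_t^\mu|^p\le C\ve^p\|\phi\|_{L^p(\mu)}^p$ (the latter from \eqref{2PfEst}-type estimates applied to the two solutions). A Gronwall estimate on $\E\sup_{s\le t}|\eta_s^\ve-G_s^\phi|^p$ then yields $\eta^\ve\to G^\phi$ in $L^p(\Omega\ra C([0,T];\R^d),\P)$ as $\ve\downarrow0$, proving (i). For (ii), since $\varrho_s^{\mu_{\ve,\phi}}-\varrho_s^\mu=\int_0^s(\ti\si_r(P_r^*\mu_{\ve,\phi})-\ti\si_r(P_r^*\mu))\d\ti B_r^{\ti H}$, I would divide by $\ve$, use the $L$-derivative expansion of $\ti\si_r(P_r^*\mu_{\ve,\phi})-\ti\si_r(P_r^*\mu)$ (via Lemma \ref{FoLD}, noting $\sL_{X_r^{\mu_{\ve,\phi}}}$ moves along $\sL_{X_r^\mu+\th(X_r^{\mu_{\ve,\phi}}-X_r^\mu)}$), and pass to the limit inside the fractional Wiener integral using the $\ti H$-moment estimate \cite[(3.8)]{FHSY}, the convergence from part (i), and continuity of $D^L\ti\si_r$; this gives $\E(\sup_{s\le t}|(\varrho_s^{\mu_{\ve,\phi}}-\varrho_s^\mu)/\ve-\Lambda_s|^p)\to0$ with $\Lambda$ as stated.

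The main obstacle I anticipate is the joint passage to the limit for the distribution-dependent terms: one must show that $\E\<D^Lb_r(y,\cdot)(\sL_{X_r^{\mu_{\ve,\phi}}})(X_r^{\mu_{\ve,\phi}}),\eta_r^\ve\>$ (and its $\ti\si$-analogue) converges to the corresponding expression with $G^\phi$ and $P_r^*\mu$, which requires combining (a) the Hölder-type continuity of $D^Lb$ in all its arguments — available here only through the weaker \textsc{\textbf{(H1')}} bound $|D^Lb_t(x,\cdot)(\mu)(y)|\le\kappa_t$ plus joint continuity from $b\in C^{1,(1,0)}$, not the quantitative \textsc{\textbf{(H2)}} which is not assumed in this lemma — with (b) uniform $L^p$-integrability of $\eta_r^\ve$ coming from the a priori bound, to justify a dominated-convergence/uniform-integrability argument inside the expectation. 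Handling this cleanly, while keeping the fractional Wiener integrals under control via \cite[(3.8)]{FHSY}, is the technical heart; the rest is the routine Gronwall machinery already rehearsed in the proof of Theorem \ref{WP}.
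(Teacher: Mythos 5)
Your proposal follows essentially the same route as the paper: define $\Pi^\ve_t=(X_t^{\mu_{\ve,\phi}}-X_t^\mu)/\ve$, expand the drift and $\ti\si$ differences along segments via Lemma \ref{FoLD}, view $\Pi^\ve$ as a perturbation of the linear equation \eqref{ParV-0}, control the fractional Wiener integrals by the moment estimate of \cite[(3.5)]{FHSY}, and close with Gronwall plus a dominated-convergence argument resting on the joint continuity from $C^{1,(1,0)}$ and the uniform $L^p$ bounds; part (ii) is likewise handled by the same $\ti H$-Wiener-integral estimate and the Lipschitz bound $|\ti\si_r(P_r^*\mu_{\ve,\phi})-\ti\si_r(P_r^*\mu)|\le C\ve\|\phi\|_{L^p(\mu)}$. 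Your identification of the limit-interchange for the distribution-dependent terms as the technical heart, and your observation that the $\ti B^{\ti H}$-integrands are deterministic (being expectations), both match the paper's treatment.
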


\begin{proof}
(i) We first set
\beg{align*}
\Pi_t^\ve:=\ff {X_t^{\mu_{\ve,\phi}}-X_t^\mu}\ve,\ \ t\in[0,T],\ve>0.
\end{align*}
By Lemma \ref{FoLD}, we deduce that for any $t\in[0,T]$,
\beg{align}\label{5PfPD-GT}
\d\Pi^\ve_t=&\ff {b_t(X_t^{\mu_{\ve,\phi}},\sL_{X_t^{\mu_{\ve,\phi}}})-b_t(X_t^\mu,\sL_{X_t^\mu})} \ve\d t
+\ff{\ti\si_t(\L_{X_t^{\mu_{\ve,\phi}}})-\ti\si_t(\L_{X_t^\mu})}\ve\d\ti B^{\ti H}_t \nonumber\\
=&\bigg[\int_0^1\Big(\na_{\Pi^\ve_t} b_t(\cdot,\sL_{X_t^{\mu_{\ve,\phi}}})(X^\ve_t(\th)) \nonumber\\
&\ \ \ \ \ \ \ \ \ \  +(\E\<D^Lb_t(x,\cdot)(\sL_{X^\ve_t(\th)})(X^\ve_t(\th)),\Pi^\ve_t\>)|_{x=X_t^\mu}\Big)\d\th\bigg]\d t\nonumber\\
&+\left[\int_0^1\E\<D^L\ti\si_t(\sL_{X^\ve_t(\th)})(X^\ve_t(\th)),\Pi^\ve_t\>\d\th\right]\d\ti B^{\ti H}_t,\ \ \Pi^\ve_0=\phi(X_0^\mu),
\end{align}
where $X^\ve_t(\th):=X_t^\mu+\th(X_t^{\mu_{\ve,\phi}}-X_t^\mu), \th\in[0,1]$.\\
On the other hand, it is easy to see that under \textbf{(H1')}, \eqref{ParV-0} has a unique solution.
Combining \eqref{ParV-0} with \eqref{5PfPD-GT} implies that for any $t\in[0,T]$,
\beg{align*}
\d(\Pi^\ve_t- G^\phi_t)&
=\left(\na_{\Pi^\ve_t-G^\phi_t}b_t(\cdot,\sL_{X_t^\mu})(X_t^\mu)+\Psi^\ve_1(t)\right)\d t\cr
&\quad+\left[\left(\E\<D^Lb_t(x,\cdot)(\sL_{X_t^\mu})(X_t^\mu),\Pi^\ve_t-G^\phi_t\>\right)\big|_{x=X_t^\mu}+\Psi^\ve_2(t)\right]\d t\cr
&\quad+\left(\E\<D^L\ti\si_t(\sL_{X_t^\mu})(X_t^\mu),\Pi^\ve_t-G^\phi_t\>+\Psi^\ve_3(t)\right)\d\ti B^{\ti H}_t,
\ \
\Pi^\ve_0- G^\phi_0=0,
\end{align*}
where
\beg{align*}
\Psi^\ve_1(t)&:=\int_0^1\left(\na_{\Pi^\ve_t} b_t(\cdot,\sL_{X_t^{\mu_{\ve,\phi}}})(X^\ve_t(\th))-\na_{\Pi^\ve_t}b_t(\cdot,\sL_{X_t^\mu})(X_t^\mu)\right)\d\th,\cr
\Psi^\ve_2(t)&:=\int_0^1\left(\E\<D^Lb_t(x,\cdot)(\sL_{X^\ve_t(\th)})(X^\ve_t(\th))-D^Lb_t(x,\cdot)(\sL_{X_t^\mu})(X_t^\mu),\Pi^\ve_t\>\right)|_{x=X_t^\mu}\d\th,\cr
\Psi^\ve_3(t)&:=\int_0^1\E\<D^L\ti\si_t(\sL_{X^\ve_t(\th)})(X^\ve_t(\th))-D^L\ti\si_t(\sL_{X_t^\mu})(X_t^\mu),\Pi^\ve_t\>\d\th.
\end{align*}
Then, using \textsc{\textbf{(H1')}} we have
\beg{align}\label{1PfPD-GT}
&|\Pi^\ve_t- G^\phi_t|^p\cr
\leq& C_{p,T,\kappa}\bigg[\int_0^t(|\Psi^\ve_1(s)|^p+|\Psi^\ve_2(s)|^p)\d s+\int_0^t\left(|\Pi^\ve_s- G^\phi_s|^p+\E|\Pi^\ve_s- G^\phi_s|^p\right)\d s\cr
&\qquad\quad+\left|\int_0^t\left(\E\<D^L\ti\si_s(\sL_{X_s^\mu})(X_s^\mu),\Pi^\ve_s-G^\phi_s\>+\Psi^\ve_3(s)\right)\d\ti B^{\ti H}_s\right|^p\Bigg].
\end{align}
By \cite[(3.5) in the proof of Theorem 3.1]{FHSY} and \textsc{\textbf{(H3)}}, we get
\beg{align}\label{2PfPD-GT}
&\E\left(\sup_{t\in[0,T]}\left|\int_0^t\left(\E\<D^L\ti\si_s(\sL_{X_s^\mu})(X_s^\mu),\Pi^\ve_s-G^\phi_s\>+\Psi^\ve_3(s)\right)\d\ti B^{\ti H}_s\right|^p\right)\cr
\leq&C_{p,T,\ti H}\int_0^T\left(\|\E\<D^L\ti\si_s(\sL_{X_s^\mu})(X_s^\mu),\Pi^\ve_s-G^\phi_s\>\|^p+\|\Psi^\ve_3(s)\|^p\right)\d s\cr
\leq&C_{p,T,\kappa,\ti H}\int_0^T\left(\E|\Pi^\ve_s-G^\phi_s|^p+\|\Psi^\ve_3(s)\|^p\right)\d s.
\end{align}
Additional, similar to \cite[Lemma 4.1 and (4.9)]{FHSY}, one has
\beg{align*}
\sup_{\ve\in(0,1]}\E\bigg(\sup_{t\in[0,T]}|\Pi^\ve_t|^p\bigg)+\E\bigg(\sup_{t\in[0,T]}|G^\phi_t|^p\bigg)
\leq C_{p,T,\kappa,H,\ti H}\|\phi\|^p_{L^p(\mu)}.
\end{align*}
Consequently, combining this with \eqref{1PfPD-GT}-\eqref{2PfPD-GT} and applying the Gronwall lemma, we obtain
\beg{align*}
\E\bigg(\sup_{t\in[0,T]}|\Pi^\ve_t-G^\phi_t|^p\bigg)\le C_{p,T,\kappa,\ti H}
\int_0^T\E\left(|\Psi^\ve_1(s)|^p+|\Psi^\ve_2(s)|^p+\|\Psi^\ve_3(s)\|^p\right)\d s.
\end{align*}
Then, following the argument to derive the assertion of \cite[Proposition 4.2]{FHSY} from \cite[(4.10) in the proof of Proposition 4.2]{FHSY},
we conclude that
\beg{align*}
\lim_{\ve\downarrow0}\E\bigg(\sup_{t\in[0,T]}|\Pi^\ve_t-G^\phi_t|^p\bigg)=0,
\end{align*}
which is exactly the first claim.

(ii) From \cite[(3.5) in the proof of Theorem 3.1]{{FHSY}} again, it follows that
\beg{align*}
&\E\bigg(\sup_{s\in[0,t]}\left|\ff{\varrho_s^{\mu_{\ve,\phi}}-\varrho^\mu_s}\ve-\Lambda_s\right|^p\bigg)\cr
\leq&C_{p,\ti H}t^{p\ti H-1}\int_0^t\left|\ff{\ti\si_r(P_r^*\mu_{\ve,\phi})-\ti\si_r(P_r^*\mu)}\ve-\E[\langle D^L\ti\si_r(P_r^*\mu)(X_r^\mu),\nabla_\phi X_r^\mu\rangle]\right|^p\d r.
\end{align*}
Observe that by \textbf{(H1')} and Theorem \ref{WP}, we get
\beg{align*}
|\ti\si_r(P_r^*\mu_{\ve,\phi})-\ti\si_r(P_r^*\mu)|\leq\kappa_r\W_\theta(P_r^*\mu_{\ve,\phi},P_r^*\mu)
\leq C_{p,T,\kappa,\ti H}\W_p(\mu_{\ve,\phi},\mu)\leq C_{p,T,\kappa,\ti H}\ve\|\phi\|_{L^p(\mu)}.
\end{align*}
Then, using \textbf{(H3)} and the assertion (i), and applying the dominated convergence theorem and Lemma \ref{FoLD}, we derive the second claim.
\end{proof}

Our main result in this part is the following.

\begin{thm}\label{Th(NBD)}
Consider Eq. \eqref{GeEq}.
If one of the two following assumptions holds:
\beg{enumerate}
\item[(I)] $H\in(1/2,1)$, $b,\si,\ti\si$ satisfy \textsc{\textbf{(H1')}}, \textsc{\textbf{(H2)}} and \textsc{\textbf{(H3)}};
\item[(II)] $H\in(0,1/2), b,\ti\si$ satisfies \textsc{\textbf{(H1')}}, \textsc{\textbf{(H3)}} and $\si_t$ does not depend on $t$,
\end{enumerate}
then for any $t\in(0,T],f\in\sB_b(\R^d),\mu\in\sP_p(\R^d)$ and $\phi\in L^p(\R^d\ra\R^d,\mu)$ with $p\geq 2(1+\be)$ if $H\in(1/2,1)$ or $p\geq 2$ if $H\in(0,1/2)$, $D^L_\phi(P_t f)(\mu)$ exists and satisfies
\beg{align}\label{1Th(NBD)}
D^L_\phi(P_{t}f)(\mu)
=\E\left(f(X_{t}^\mu)\int_0^{t}\left\langle K_H^{-1}\left(\int_0^\cdot\si^{-1}_r\Upsilon_{r,t}\d r\right)(s),\d W_s\right\rangle\right),
\end{align}
where $\Upsilon_{\cdot,\cdot}$ is given by
\beg{align*}
\Upsilon_{r,t}=&\ff{\phi(X_0^\mu)+\Lambda_{t}}{t}+\na b_r(\cdot,P_r^*\mu)(X_r^\mu)\left(\ff{t-r}{t}\phi(X_0^\mu)-\ff r {t}\Lambda_{t}+\Lambda_r\right)\cr
&+\E[\langle D^Lb_r(x,\cdot)(P_r^*\mu)(X_r^\mu),\nabla_\phi X_r^\mu\rangle]|_{x=X_r^\mu},\ \ 0\leq r<t\leq T
\end{align*}
with $\Lambda_\cdot$ defined in Lemma \ref{PD-GT}.
\end{thm}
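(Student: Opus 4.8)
The plan is to adapt the backward-coupling argument behind the log-Harnack inequality (Theorem~\ref{Th(Har)}) into a \emph{differentiable} coupling and then differentiate in the perturbation parameter. Fix $t\in(0,T]$. For $\ve\in[0,1]$ put $X_0^{\mu_{\ve,\phi}}=X_0^\mu+\ve\phi(X_0^\mu)$ and, on $[0,t]$, introduce the coupling
\beg{align*}
\d Y^\ve_s=&\left[b_s(X_s^\mu,P_s^*\mu)+\ff 1 t\big(X_0^\mu-X_0^{\mu_{\ve,\phi}}+\varrho^\mu_t-\varrho^{\mu_{\ve,\phi}}_t\big)\right]\d s\cr
&+\si_s\d B^H_s+\ti\si_s(P_s^*\mu_{\ve,\phi})\d\ti B^{\ti H}_s,
\end{align*}
with $Y^\ve_0=X_0^{\mu_{\ve,\phi}}$, which is meaningful once we condition on $\ti B^{\ti H}$ and $\sF_0$ so that $\varrho^{\mu_{\ve,\phi}}_t,\varrho^\mu_t$ become deterministic, just as in \eqref{2PfTh(Har)}. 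As in \eqref{3PfTh(Har)} one checks that $Y^\ve_t=X^\mu_t$ and, using Lemma~\ref{PD-GT}(ii) for $\ve^{-1}(\varrho^{\mu_{\ve,\phi}}-\varrho^\mu)\to\Lambda$ together with \eqref{2PfEst}, that $\ve^{-1}(Y^\ve_\cdot-X^\mu_\cdot)\to V_\cdot$ in $\cS^p([0,t])$ as $\ve\da0$, where $V_s:=\ff{t-s}t\phi(X_0^\mu)-\ff st\Lambda_t+\Lambda_s$.

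Next I would rewrite the coupling in the self-referential form (compare \eqref{4PfTh(Har)})
\beg{align*}
\d Y^\ve_s=b_s(Y^\ve_s,P_s^*\mu_{\ve,\phi})\d s+\si_s\d\bar B^{H,\ve}_s+\ti\si_s(P_s^*\mu_{\ve,\phi})\d\ti B^{\ti H}_s,\qquad s\in[0,t],
\end{align*}
with $\bar B^{H,\ve}_s=B^H_s-\int_0^s\si^{-1}_r\zeta^\ve_r\d r$, where
\beg{align*}
\zeta^\ve_s=b_s(Y^\ve_s,P_s^*\mu_{\ve,\phi})-b_s(X_s^\mu,P_s^*\mu)+\ff\ve t\phi(X_0^\mu)+\ff1t\big(\varrho^{\mu_{\ve,\phi}}_t-\varrho^\mu_t\big),
\end{align*}
and, with $W$ the Wiener process of \eqref{IRFor},
\beg{align*}
R_\ve=\exp\left[\int_0^t\left\langle K_H^{-1}\Big(\int_0^\cdot\si^{-1}_r\zeta^\ve_r\d r\Big)(s),\d W_s\right\rangle-\ff12\int_0^t\left|K_H^{-1}\Big(\int_0^\cdot\si^{-1}_r\zeta^\ve_r\d r\Big)(s)\right|^2\d s\right].
\end{align*}
By Remark~\ref{Re(GirsT)}(i) (whose bounds hold uniformly for $\ve\in[0,1]$) and the fractional Girsanov theorem, $\bar B^{H,\ve}$ is a fractional Brownian motion under $R_\ve\P^{\ti H,0}$; applying weak uniqueness to the equation satisfied by $Y^\ve_\cdot-\varrho^{\mu_{\ve,\phi}}_\cdot$ exactly as in the derivation of \eqref{5PfTh(Har)} gives $\E[R_\ve g(X_t^\mu)]=(P_tg)(\mu_{\ve,\phi})$ for every $g\in\sB_b(\R^d)$. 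Since $\zeta^0\equiv0$ and hence $R_0=1$, this yields
\beg{align*}
D^L_\phi(P_tf)(\mu)=\lim_{\ve\da0}\ff{(P_tf)(\mu_{\ve,\phi})-(P_tf)(\mu)}\ve=\lim_{\ve\da0}\E\left[f(X_t^\mu)\ff{R_\ve-1}\ve\right],
\end{align*}
so it remains only to identify this limit.

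To this end, expanding $b$ in the spatial variable by the fundamental theorem of calculus and in the measure variable by Lemma~\ref{FoLD}, and inserting $\ve^{-1}(Y^\ve_\cdot-X^\mu_\cdot)\to V_\cdot$ and Lemma~\ref{PD-GT}(i), one gets $\ve^{-1}\zeta^\ve_s\to\Upsilon_{s,t}$; indeed this is precisely what the definition of $\Upsilon_{s,t}$ encodes, with $V_s=\ff{t-s}t\phi(X_0^\mu)-\ff st\Lambda_t+\Lambda_s$ appearing as the argument of $\na b_s$. More quantitatively, the fractional-calculus splitting \eqref{1PfGirsT} (resp. \eqref{6PfGirsT} when $H\in(0,1/2)$) together with the estimates of Lemma~\ref{GirsT} shows $K_H^{-1}\big(\int_0^\cdot\si^{-1}_r(\ve^{-1}\zeta^\ve_r-\Upsilon_{r,t})\d r\big)\to0$ in $L^2(\Omega\times[0,t])$. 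Writing $R_\ve=\exp(A_\ve-\tfrac12Q_\ve)$ with $A_\ve,Q_\ve$ the stochastic and Lebesgue integrals above, we then have $A_\ve-\tfrac12Q_\ve\to0$, $\ve^{-1}A_\ve\to N_t:=\int_0^t\langle K_H^{-1}(\int_0^\cdot\si^{-1}_r\Upsilon_{r,t}\d r)(s),\d W_s\rangle$ in $L^2$ under $\P^{\ti H,0}$ (It\^o isometry), and $\ve^{-1}Q_\ve=\ve\cdot\ve^{-2}Q_\ve\to0$, whence
\beg{align*}
\ff{R_\ve-1}\ve=\ff{A_\ve-\frac12Q_\ve}\ve\cdot\ff{\e^{A_\ve-\frac12Q_\ve}-1}{A_\ve-\frac12Q_\ve}\longrightarrow N_t\quad\text{in probability.}
\end{align*}
Upgrading to $L^1(\P)$-convergence via H\"older's inequality and the uniform exponential integrability of Remark~\ref{Re(GirsT)}(i), and using the boundedness of $f$, one may pass to the limit and obtain \eqref{1Th(NBD)}.

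The step I expect to be the main obstacle is this last one: the $L^1$-differentiability of $\ve\mapsto R_\ve$ at $\ve=0$. It rests on (i) the $\cS^p$-convergence $\ve^{-1}(Y^\ve-X^\mu)\to V$ with a rate, which needs the time-H\"older regularity of $X^\mu$ and $\varrho^\mu$ together with the convergence statements of Lemma~\ref{PD-GT} and \eqref{2PfEst}; (ii) control of the singular operator $K_H^{-1}\big(\int_0^\cdot\si^{-1}_r\,\cdot\,\d r\big)$ applied to the relevant difference quotients, handled by the Weyl-type decomposition \eqref{1PfGirsT} and the bounds of Lemma~\ref{GirsT} --- this is where the moment requirement $p\geq2(1+\be)$ and \textsc{\textbf{(H2)}} enter when $H\in(1/2,1)$; and (iii) uniform integrability, obtained from the Fernique theorem as in Remark~\ref{Re(GirsT)}(i). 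For $H\in(0,1/2)$ all of this simplifies substantially, since \eqref{6PfGirsT} bounds the integrand pointwise and $p\geq2$ suffices.
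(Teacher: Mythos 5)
Your proposal is correct and follows essentially the same route as the paper's proof: the same backward coupling run with $\nu=\mu_{\ve,\phi}$, the same Girsanov density $R_\ve$ built from $\zeta^\ve$, the identification $\ve^{-1}\zeta^\ve_s\to\Upsilon_{s,t}$ through Lemma~\ref{PD-GT} and the Weyl decomposition, and the conclusion via $\lim_{\ve\da0}\E[f(X_t^\mu)(R_\ve-1)/\ve]$. The only point of divergence is the step you yourself flag as delicate --- the exchange of limit and expectation --- which you propose to close by uniform integrability of $(R_\ve-1)/\ve$, whereas the paper dominates the conditional difference quotients $\ve^{-1}\big|(P_t^{\ti H,0}f)(X_0^{\mu_{\ve,\phi}})-(P_t^{\ti H,0}f)(X_0^\mu)\big|$ directly via the Pinsker inequality combined with the entropy-cost form of the log-Harnack estimate (Lemma~\ref{GirsT} with $\nu=\mu_{\ve,\phi}$, whose bound scales linearly in $\ve$), thereby avoiding any uniform-in-$\ve$ exponential moment bound on the density.
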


\begin{proof}
Let $t_0\in(0,T]$ be fixed.
For $\ve\in(0,1]$, let $Y^\ve$ solve \eqref{2PfTh(Har)} with $\nu=\mu_{\varepsilon,\phi}$ and $Y_0=Y_0^\ve=(\mathrm{Id}+\varepsilon\phi)(X_0^\mu)$.
Correspondingly, \eqref{3PfTh(Har)} turns into
\beg{align}\label{1PfTh(NBD)}
Y_t^\ve-X_t^\mu=-\ve\ff {t-t_0} {t_0}\phi(X_0^\mu)+\ff {t} {t_0}(\varrho^\mu_{t_0}-\varrho^{\mu_{\varepsilon,\phi}}_{t_0})+\varrho^{\mu_{\varepsilon,\phi}}_t-\varrho^\mu_t,\ \ t\in[0,t_0],
\end{align}
which implies that $Y_{t_0}^\ve=X_{t_0}^\mu$.
Put
\beg{align*}
R_\ve^{\ti H,0}:=\exp\left[\int_0^{t_0}\left\langle K_H^{-1}\left(\int_0^\cdot\si^{-1}_r\zeta^\ve_r\d r\right)(s),\d W_s\right\rangle-\ff 1 2\int_0^{t_0}\left|K_H^{-1}\left(\int_0^\cdot\si^{-1}_r\zeta^\ve_r\d r\right)(s)\right|^2\d s\right]
\end{align*}
with
\beg{align*}
\zeta^\ve_s:=b_s(Y_s^\ve,P_s^*{\mu_{\varepsilon,\phi}})-b_s(X_s^\mu,P_s^*\mu)+\ff 1 {t_0}(\ve\phi(X_0^\mu)+\varrho^{\mu_{\varepsilon,\phi}}_{t_0}-\varrho^\mu_{t_0}).
\end{align*}
Similar to \eqref{5PfTh(Har)}, one has
\beg{align*}
(P_{t_0}^{\ti H,0}f)(X_0^{\mu_{\varepsilon,\phi}})=\E^{\ti H,0}\left(R_\ve^{\ti H,0}f(X_{t_0}^\mu)\right).
\end{align*}
Then, we arrive at
\beg{align}\label{3PfTh(NBD)}
\lim\limits_{\ve\downarrow0}\ff{(P_{t_0}^{\ti H,0}f)(X_0^{\mu_{\ve,\phi}})-(P_{t_0}^{\ti H,0}f)(X_0^\mu)}{\ve}=\lim\limits_{\ve\downarrow0}\E^{\ti H,0}\left(f(X_{t_0}^\mu)\ff {R_\ve^{\ti H,0}-1}\ve\right).
\end{align}
Note that we have
\beg{align}\label{4PfTh(NBD)}
&\lim\limits_{\ve\downarrow0}\E^{\ti H,0}\ff {R_\ve^{\ti H,0}-1}\ve\cr
=&\lim\limits_{\ve\downarrow0}\E^{\ti H,0}\ff{\int_0^{t_0}\left\langle K_H^{-1}\left(\int_0^\cdot\si^{-1}_r\zeta^\ve_r\d r\right)(s),\d W_s\right\rangle-\ff 1 2\int_0^{t_0}\left|K_H^{-1}\left(\int_0^\cdot\si^{-1}_r\zeta^\ve_r\d r\right)(s)\right|^2\d s}\ve\cr
=&\lim\limits_{\ve\downarrow0}\E^{\ti H,0}\ff{\int_0^{t_0}\left\langle K_H^{-1}\left(\int_0^\cdot\si^{-1}_r\zeta^\ve_r\d r\right)(s),\d W_s\right\rangle}\ve,
\end{align}
where the last equality is due to Remark \ref{Re(GirsT)} (ii) for $\mu_{\ve,\phi}$ replacing $\nu$ and the fact that $\W_p(\mu,\mu_{\ve,\phi})\leq \ve\|\phi\|_{L^p(\mu)}.$

Next, we handle the case $H\in(1/2,1)$ and $H\in(0,1/2)$ slightly.

\textsl{The case $H\in(1/2,1)$.}
In view of \eqref{InOp} and \eqref{FrDe}, one has
\beg{align}\label{2PfTh(NBD)}
&\int_0^{t_0}\left\langle K_H^{-1}\left(\int_0^\cdot\si^{-1}_r\zeta^\ve_r\d r\right)(s),\d W_s\right\rangle\cr
=&\frac{H-\ff 1 2}{\Gamma(\frac{3}{2}-H)}
\Bigg[\int_0^{t_0}\left\langle\ff {s^{\frac{1}{2}-H}\si^{-1}_s\zeta^\ve_s}{H-\ff 1 2},\d W_s\right\rangle\cr
&\qquad\qquad\quad+\int_0^{t_0}\left\langle s^{H-\frac{1}{2}}\si^{-1}_s\zeta^\ve_s\int_0^s\frac{s^{\frac{1}{2}-H}-r^{\frac{1}{2}-H}}{(s-r)^{\frac{1}{2}+H}}
\d r,\d W_s\right\rangle\cr
&\qquad\qquad\quad+\int_0^{t_0}\left\langle s^{H-\frac{1}{2}}\zeta^\ve_s\int_0^s\frac{\si^{-1}_s-\si^{-1}_r}{(s-r)^{\frac{1}{2}+H}}r^{\ff 1 2-H}\d r,\d W_s\right\rangle\cr
&\qquad\qquad\quad+\int_0^{t_0}\left\langle s^{H-\frac{1}{2}}\int_0^s\frac{\zeta^\ve_s-\zeta^\ve_r}{(s-r)^{\frac{1}{2}+H}}\si^{-1}_rr^{\ff 1 2-H}\d r,\d W_s\right\rangle\Bigg]\cr
=:&\frac{H-\ff 1 2}{\Gamma(\frac{3}{2}-H)}[J_1(t_0)+J_2(t_0)+J_3(t_0)+J_4(t_0)].
\end{align}
Note that by our hypotheses, \eqref{1PfTh(NBD)} and Lemma \ref{PD-GT}, it is readily verified that for any $r,s\in[0,t_0]$,
\beg{align}\label{5PfTh(NBD)}
\lim\limits_{\ve\downarrow0}\ff{\zeta^\ve_s}\ve=&\ff{\phi(X_0^\mu)+\Lambda_{t_0}}{t_0}+\na b_s(\cdot,P_s^*\mu)(X_s^\mu)\left(\ff{t_0-s}{t_0}\phi(X_0^\mu)-\ff s {t_0}\Lambda_{t_0}+\Lambda_s\right)\cr
&+\E[\langle D^Lb_s(x,\cdot)(P_s^*\mu)(X_s^\mu),\nabla_\phi X_s^\mu\rangle]|_{x=X_s^\mu}=:\Upsilon_{s,t_0}
\end{align}
and
 \beg{align}\label{8PfTh(NBD)}
\lim\limits_{\ve\downarrow0}\ff{\zeta^\ve_s-\zeta^\ve_r}\ve=&\na b_s(\cdot,P_s^*\mu)(X_s^\mu)\left(\ff{t_0-s}{t_0}\phi(X_0^\mu)-\ff s {t_0}\Lambda_{t_0}+\Lambda_s\right)\cr
&-\na b_r(\cdot,P_r^*\mu)(X_r^\mu)\left(\ff{t_0-r}{t_0}\phi(X_0^\mu)-\ff r {t_0}\Lambda_{t_0}+\Lambda_r\right)\cr
&+E[\langle D^Lb_s(x,\cdot)(P_s^*\mu)(X_s^\mu),\nabla_\phi X_s^\mu\rangle]|_{x=X_s^\mu}\cr
&-E[\langle D^Lb_r(y,\cdot)(P_r^*\mu)(X_r^\mu),\nabla_\phi X_r^\mu\rangle]|_{y=X_r^\mu}\cr
=&\Upsilon_{s,t_0}-\Upsilon_{r,t_0}.
\end{align}
Then, applying the dominated convergence theorem, we obtain that as $\ve\downarrow0$, $J_i(t_0)/\ve, i=1,\cdots,4$, converge to
\beg{align*}
&\int_0^{t_0}\left\langle\ff {s^{\frac{1}{2}-H}\si^{-1}_s\Upsilon_{s,t_0}}{H-\ff 1 2},\d W_s\right\rangle,\\
&\int_0^{t_0}\left\langle s^{H-\frac{1}{2}}\si^{-1}_s\Upsilon_{s,t_0}\int_0^s\frac{s^{\frac{1}{2}-H}-r^{\frac{1}{2}-H}}{(s-r)^{\frac{1}{2}+H}}
\d r,\d W_s\right\rangle,\\
&\int_0^{t_0}\left\langle s^{H-\frac{1}{2}}\Upsilon_{s,t_0}\int_0^s\frac{\si^{-1}_s-\si^{-1}_r}{(s-r)^{\frac{1}{2}+H}}r^{\ff 1 2-H}\d r,\d W_s\right\rangle
\end{align*}
and
\beg{align*}
\int_0^{t_0}\left\langle s^{H-\frac{1}{2}}\int_0^s\frac{\Upsilon_{s,t_0}-\Upsilon_{r,t_0}}{(s-r)^{\frac{1}{2}+H}}\si^{-1}_rr^{\ff 1 2-H}\d r,\d W_s\right\rangle.
\end{align*}
in $L^1(\P^{\ti H,0})$, respectively.
Consequently, combining these with \eqref{3PfTh(NBD)}, \eqref{4PfTh(NBD)} and \eqref{2PfTh(NBD)}, we conclude that
\beg{align}\label{6PfTh(NBD)}
&\lim\limits_{\ve\downarrow0}\ff{(P_{t_0}^{\ti H,0}f)(X_0^{\mu_{\ve,\phi}})-(P_{t_0}^{\ti H,0}f)(X_0^\mu)}{\ve}\cr
=&\E^{\ti H,0}\Bigg(f(X_{t_0}^\mu)\cdot
\frac{H-\ff 1 2}{\Gamma(\frac{3}{2}-H)}
\Bigg[\int_0^{t_0}\left\langle\ff {s^{\frac{1}{2}-H}\si^{-1}_s\Upsilon_{s,t_0}}{H-\ff 1 2},\d W_s\right\rangle\cr
&\qquad\qquad\quad\qquad\qquad\qquad+\int_0^{t_0}\left\langle s^{H-\frac{1}{2}}\si^{-1}_s\Upsilon_{s,t_0}\int_0^s\frac{s^{\frac{1}{2}-H}-r^{\frac{1}{2}-H}}{(s-r)^{\frac{1}{2}+H}}
\d r,\d W_s\right\rangle\cr
&\qquad\qquad\quad\qquad\qquad\qquad+\int_0^{t_0}\left\langle s^{H-\frac{1}{2}}\Upsilon_{s,t_0}\int_0^s\frac{\si^{-1}_s-\si^{-1}_r}{(s-r)^{\frac{1}{2}+H}}r^{\ff 1 2-H}\d r,\d W_s\right\rangle\cr
&\qquad\qquad\quad\qquad\qquad\qquad+\int_0^{t_0}\left\langle s^{H-\frac{1}{2}}\int_0^s\frac{\Upsilon_{s,t_0}-\Upsilon_{r,t_0}}{(s-r)^{\frac{1}{2}+H}}\si^{-1}_rr^{\ff 1 2-H}\d r,\d W_s\right\rangle\Bigg]\Bigg)\cr
=&\E^{\ti H,0}\left(f(X_{t_0}^\mu)\int_0^{t_0}\left\langle K_H^{-1}\left(\int_0^\cdot\si^{-1}_r\Upsilon_{r,t_0}\d r\right)(s),\d W_s\right\rangle\right).
\end{align}
Here we have used \eqref{InOp} and \eqref{FrDe} in the last relation.

Now, let $\sL_{Y|\P^{\ti H,0}}$ be the conditional distribution of a random variable $Y$ under $\P^{\ti H,0}$.
According to the Pinsker inequality, we have
\beg{align*}
\sup_{\|f\|_\infty\leq 1}\left|(P_{t_0}^{\ti H,0}f)(X_0^{\mu_{\ve,\phi}})-(P_{t_0}^{\ti H,0}f)(X_0^\mu)\right|^2
=&\sup_{\|f\|_\infty\leq 1}\left|\sL_{X_{t_0}^{\mu_{\ve,\phi}}|\P^{\ti H,0}}(f)-\sL_{X_{t_0}^\mu|\P^{\ti H,0}}(f)\right|^2\cr
\leq& 2\mathrm{Ent}\left(\sL_{X_{t_0}^{\mu_{\ve,\phi}}|\P^{\ti H,0}}|\sL_{X_{t_0}^\mu|\P^{\ti H,0}}\right).
\end{align*}
Then, using the equivalence between the log-Harnack inequality and the entropy-cost estimate (see Remark \ref{ReTh(Har)}),
it follows from \eqref{6PfTh(Har)} and Lemma \ref{GirsT} that
\beg{align*}
\sup_{\|f\|_\infty\leq 1}\left|(P_{t_0}^{\ti H,0}f)(X_0^{\mu_{\ve,\phi}})-(P_{t_0}^{\ti H,0}f)(X_0^\mu)\right|^2\leq 2\vartheta(H),
\end{align*}
where $\nu$ of $\vartheta(H)$ is replaced by $\mu_{\ve,\phi}$.\\
Consequently, this, along with the expression of $\vartheta(H)$ and Theorem \ref{WP}, leads to
\beg{align}\label{7PfTh(NBD)}
&\ff{|(P_{t_0}^{\ti H,0}f)(X_0^{\mu_{\ve,\phi}})-(P_{t_0}^{\ti H,0}f)(X_0^\mu)|}{\ve}\cr
\leq& C_{T,\kappa,\ti\kappa,H,\ti H}\|f\|_\infty\Bigg[\|\phi\|_{L^p(\mu)}+\bigg(\ff 1 {t_0^{H}}+\|\varrho^\mu_\cdot\|^{\be}_{\ti H-\varsigma_1}+
\|\varrho^{\mu_{\ve,\phi}}_\cdot\|^{\be}_{\ti H-\varsigma_2}+\psi^{\beta}(X_0,\varrho)\bigg)\tilde{\psi}(X_0,\varrho)\cr
&\qquad\qquad\qquad\quad+\left(1+|X_0^\mu|^{\be}+\|\varrho^\mu_\cdot\|_\infty^{\be}+\left\|\varrho^\mu_\cdot\right\|^{\be}_{\ti H-\varsigma_1}\right)(\|\phi\|_{L^p(\mu)}+\tilde{\psi}(X_0,\varrho))\cr
&\qquad\qquad\qquad\quad+\left(\int_0^{t_0}s^{2H-1}\left(\int_0^s\frac{|\varrho^{\mu_{\ve,\phi}}_s-\varrho^\mu_s-(\varrho^{\mu_{\ve,\phi}}_r-\varrho^\mu_r)|}{(s-r)^{\frac{1}{2}+H}}r^{\ff 1 2-H}\d r\right)^2\d s\right)^{\ff 1 2}\Bigg]
\end{align}
with $\tilde{\psi}(X_0,\varrho):=|\phi(X_0^\mu)|+\sup_{s\in[0,t_0]}|\varrho^\mu_s-\varrho^{\mu_{\ve,\phi}}_s|/\ve$.
Therefore, taking into account of \eqref{6PfTh(NBD)} and Remark \ref{Re(Est)}, applying the dominated convergence theorem yields
\beg{align}\label{9PfTh(NBD)}
D^L_\phi(P_{t_0}f)(\mu)=&\lim\limits_{\ve\downarrow0}\E\ff{(P_{t_0}^{\ti H,0}f)(X_0^{\mu_{\ve,\phi}})-(P_{t_0}^{\ti H,0}f)(X_0^\mu)}{\ve}\cr
=&\E\left(\lim\limits_{\ve\downarrow0}\ff{(P_{t_0}^{\ti H,0}f)(X_0^{\mu_{\ve,\phi}})-(P_{t_0}^{\ti H,0}f)(X_0^\mu)}{\ve}\right)\cr
=&\E\left(f(X_{t_0}^\mu)\int_0^{t_0}\left\langle K_H^{-1}\left(\int_0^\cdot\si^{-1}_r\Upsilon_{r,t_0}\d r\right)(s),\d W_s\right\rangle\right).
\end{align}

\textsl{The case $H\in(0,1/2)$.}
Using \eqref{-HOF} and \eqref{FrIn}, we first have
\beg{align*}
&\int_0^{t_0}\left\langle K_H^{-1}\left(\int_0^\cdot\si^{-1}_r\zeta_r^\ve\d r\right)(s),\d W_s\right\rangle
=\int_0^{t_0}\left\langle\frac{\si^{-1}s^{H-\ff 1 2}}{\Gamma(\frac{1}{2}-H)}\int_0^s\frac{r^{\ff 1 2-H}\zeta_r^\ve}{(s-r)^{\frac{1}{2}+H}}\d r,\d W_s\right\rangle.
\end{align*}
Reasoning as in \eqref{6PfTh(NBD)} and \eqref{7PfTh(NBD)} , it can be shown that
\beg{align}\label{10PfTh(NBD)}
&\lim\limits_{\ve\downarrow0}\ff{(P_{t_0}^{\ti H,0}f)(X_0^{\mu_{\ve,\phi}})-(P_{t_0}^{\ti H,0}f)(X_0^\mu)}{\ve}\cr
=&\E^{\ti H,0}\Bigg(f(X_{t_0}^\mu)\cdot
\int_0^{t_0}\left\langle\frac{\si^{-1}s^{H-\ff 1 2}}{\Gamma(\frac{1}{2}-H)}\int_0^s\frac{r^{\ff 1 2-H}\Upsilon_{r,t_0}}{(s-r)^{\frac{1}{2}+H}}\d r,\d W_s\right\rangle\cr
=&\E^{\ti H,0}\left(f(X_{t_0}^\mu)\int_0^{t_0}\left\langle K_H^{-1}\left(\int_0^\cdot\si^{-1}_r\Upsilon_{r,t_0}\d r\right)(s),\d W_s\right\rangle\right).
\end{align}
and
\beg{align*}
\ff{|(P_{t_0}^{\ti H,0}f)(X_0^{\mu_{\ve,\phi}})-(P_{t_0}^{\ti H,0}f)(X_0^\mu)|}{\ve}
\leq C_{T,\kappa,H,\ti H}\|f\|_\infty\left(\ff {\tilde{\psi}(X_0,\varrho)} {t_0^{H}}+\|\phi\|_{L^p(\mu)}\right).
\end{align*}
So, by Remark \ref{Re(Est)} and the dominated convergence theorem again, we deduce
\beg{align}\label{11PfTh(NBD)}
D^L_\phi(P_{t_0}f)(\mu)=&\lim\limits_{\ve\downarrow0}\E\ff{(P_{t_0}^{\ti H,0}f)(X_0^{\mu_{\ve,\phi}})-(P_{t_0}^{\ti H,0}f)(X_0^\mu)}{\ve}\cr
=&\E\left(f(X_{t_0}^\mu)\int_0^{t_0}\left\langle K_H^{-1}\left(\int_0^\cdot\si^{-1}_r\Upsilon_{r,t_0}\d r\right)(s),\d W_s\right\rangle\right).
\end{align}
Our proof is now finished.
\end{proof}

\begin{rem}\label{ReTh(NBD)}
(i) Due to \eqref{InOp} and \eqref{-HOF}, we can rewrite the term $K_H^{-1}\left(\int_0^\cdot\si^{-1}_r\Upsilon_{r,t}\d r\right)(s)$
on the right-hand side of \eqref{1Th(NBD)} as follows
\begin{equation}\begin{split}\nonumber
&K_H^{-1}\left(\int_0^\cdot\si^{-1}_r\Upsilon_{r,t}\d r\right)(s)\\
&=
\left\{
\begin{array}{ll}\vspace{0.3cm}
\frac{(H-\ff 1 2)s^{H-\ff 1 2}}{\Gamma(\frac{3}{2}-H)}
\Bigg[\ff {s^{1-2H}\si^{-1}_s\Upsilon_{s,t}}{H-\ff 1 2}
+\si^{-1}_s\Upsilon_{s,t}\int_0^s\frac{s^{\frac{1}{2}-H}-r^{\frac{1}{2}-H}}{(s-r)^{\frac{1}{2}+H}}\d r+\cr
\Upsilon_{s,t}\int_0^s\frac{(\si^{-1}_s-\si^{-1}_r)r^{\ff 1 2-H}}{(s-r)^{\frac{1}{2}+H}}\d r
+\int_0^s\frac{(\Upsilon_{s,t}-\Upsilon_{r,t})\si^{-1}_rr^{\ff 1 2-H}}{(s-r)^{\frac{1}{2}+H}}\d r\Bigg], H\in(\ff 1 2,1),\\
\frac{\si^{-1}s^{H-\ff 1 2}}{\Gamma(\frac{1}{2}-H)}
\int_0^s\frac{r^{\ff 1 2-H}\Upsilon_{r,t}}{(s-r)^{\frac{1}{2}+H}}\d r,\ \ \ \ \ \ \ \ \ \ \ \ \ \ \ \ \ \ \ \ \ \ \ \ \ \ \ \ \ \ \ \ \  H\in(0,\ff 1 2).
\end{array} \right.
\end{split}\end{equation}

(ii) Using Theorem \ref{Th(NBD)} and the H\"{o}lder inequality and following the similar argument as in Lemma \ref{GirsT},
we obtain
\beg{align*}
\|D^L(P_t f)(\mu)\|_{L^{p^*}_\mu}\le C_{T,\kappa,\ti\kappa,H,\ti H}\left(1+\ff {1} {t^H}\right)\left((P_t|f|^{p^*})(\mu)\right)^{\ff 1 {p^*}}
\end{align*}
with any $t\in(0,T], f\in\sB_b(\R^d)$ and $\mu\in\sP_p(\R^d)$,
where $C_{T,\kappa,\ti\kappa,H,\ti H}$ is a positive constant which is independent of $\ti\kappa$ when $H\in(0,1/2)$,
and $p\geq 2(1+\be)$ if $H\in(1/2,1)$ or $p\geq 2$ if $H\in(0,1/2)$.

\end{rem}

\subsection{The degenerate case}

Let $A$ and $B$ be two matrices of order $m\times m$ and $m\times l$, we now consider the following distribution dependent degenerate SDE:
\begin{equation}\label{(Deg)eq}
\begin{cases}
 \textnormal\d X^{(1)}_t=(AX^{(1)}_t+B X^{(2)}_t)\d t,\\
 \textnormal\d X^{(2)}_t=b_t(X_t,\sL_{X_t})\d t+\si_t\d B^{H}_t+\ti\si_t(\L_{X_t})\d\ti B^{\ti H}_t,
\end{cases}
\end{equation}
where $X_t=(X^{(1)}_t,X^{(2)}_t), b:[0,T]\times\R^{m+l}\times\sP_p(\R^{m+l})\ra\R^l, \si(t)$ is an invertible $l\times l$-matrix for every $t\in[0,T]$,
$\ti\si:[0,T]\times\scr P_p(\R^{m+l})\rightarrow\R^l\otimes\R^l$ are measurable.
It is worth pointing out that as in the Brownian motion case (see, e.g., \cite{BRW,RW}),
the above model is a distribution dependent stochastic Hamiltonian system with fractional noise.

\subsubsection{Log-Harnack inequality}

To establish the log-Harnack inequality, we let
\beg{align}\label{1DeTh(Har)}
U_t=\int_0^t\ff{s(t-s)}{t^2} \e^{-sA}BB^*\e^{-sA^*}\d s\geq\ell(t)\mathrm{I}_{m\times m}, \ t\in(0,T],
\end{align}
where $\ell\in C([0,T])$ satisfies $\ell(t)>0$ for any $t\in(0,T]$ and $\mathrm{I}_{m\times m}$ is the $m\times m$ identity matrix.
It is obvious that $U_t$ is invertible with $\|U_t^{-1}\|\leq 1/\ell(t)$ for every $t\in(0,T]$.
Then, our main result in the part can be stated in the following theorem.

\begin{thm}\label{DeTh(Har)}
Consider Eq. \eqref{(Deg)eq}. Assume \eqref{1DeTh(Har)} and
if one of the two following assumptions holds:
\beg{enumerate}
\item[(I)] $H\in(1/2,1)$, $b,\si,\ti\si$ satisfy \textsc{\textbf{(H1')}} and \textsc{\textbf{(H2)}} with $d=m+l$, and $p\geq 2(1+\be)$;
\item[(II)] $H\in(0,1/2), b,\ti\si$ satisfies \textsc{\textbf{(H1)}} with $d=m+l$, $\si_t$ does not depend on $t$ and $p\geq 2$.
\end{enumerate}
Then for any $t\in(0,T], \mu,\nu\in\sP_p(\R^{m+l})$ and $0<f\in\sB_b(\R^{m+l})$,
\beg{align*}
(P_t\log f)(\nu)\leq\log(P_t f)(\mu)+\chi(H),
\end{align*}
where
\begin{equation}\nonumber
\chi(H)=
\left\{
\begin{array}{ll}\vspace{0.3cm}
 C_{T,\kappa,\ti\kappa,H,\ti H}\left(1+\W_p(\mu,\nu)^{2\be}+\ff 1 {t^{2H}}+\ff 1 {\ell^2(t)}+\ff 1{t^{2H}\ell^2(t)}\right)\W_p(\mu,\nu)^2,  \ H\in(1/2,1),\\
 C_{T,\kappa,H,\ti H}\left(1+\ff 1{t^{2H}}+\ff 1 {\ell^2(t)}+\ff 1{t^{2H}\ell^2(t)}\right)\W_p(\mu,\nu)^2,\ \ \ \ \ \ \ \ \ \ \ \ \ \ \ \ \ \ \ H\in(0,1/2).
\end{array} \right.
\end{equation}
\end{thm}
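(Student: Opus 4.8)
The plan is to adapt the proof of Theorem~\ref{Th(Har)}, replacing the translation coupling by the degenerate coupling tailored to the stochastic Hamiltonian structure of \eqref{(Deg)eq}, in which only the drift of the second component may be modified. First, Eq.~\eqref{(Deg)eq} is well posed by Theorem~\ref{WP} with $d=m+l$, since the linear part $(x^{(1)},x^{(2)})\mapsto(Ax^{(1)}+Bx^{(2)},0)$ is Lipschitz. Fix $t_0\in(0,T]$, choose $\sF_0$-measurable $X_0^\mu=(X_0^{\mu,1},X_0^{\mu,2})$ and $X_0^\nu=(X_0^{\nu,1},X_0^{\nu,2})$ with $\E|X_0^\mu-X_0^\nu|^p=\W_p(\mu,\nu)^p$, write $\varrho_s^\mu=\int_0^s\ti\si_r(P_r^*\mu)\d\ti B_r^{\ti H}$ as in Remark~\ref{Re(Est)}, and set $\psi(X_0,\varrho):=|X_0^\mu-X_0^\nu|+\sup_{s\in[0,t_0]}|\varrho_s^\mu-\varrho_s^\nu|$. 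I would consider the coupling $Y_t=(Y_t^{(1)},Y_t^{(2)})$ given by
\begin{align*}
\d Y_t^{(1)}&=(AY_t^{(1)}+BY_t^{(2)})\d t,\\
\d Y_t^{(2)}&=\big[b_t(X_t^\mu,P_t^*\mu)+\eta_t\big]\d t+\si_t\d B_t^{H}+\ti\si_t(P_t^*\nu)\d\ti B_t^{\ti H},
\end{align*}
with $Y_0=X_0^\nu$, where $\eta_t$ is an $(\sF_0,\ti B^{\ti H})$-measurable control steering $\Delta_t:=Y_t-X_t^\mu$ to $0$ at $t=t_0$. Since $\Delta_t^{(2)}=\Delta_0^{(2)}+\int_0^t\eta_s\d s+\varrho_t^\nu-\varrho_t^\mu$ and $\Delta_t^{(1)}=\e^{tA}\Delta_0^{(1)}+\int_0^t\e^{(t-s)A}B\Delta_s^{(2)}\d s$, the requirements $\Delta_{t_0}^{(1)}=\Delta_{t_0}^{(2)}=0$ amount to the control problem $\int_0^{t_0}\e^{-sA}B\Delta_s^{(2)}\d s=-\Delta_0^{(1)}$, which is solved by
\[
\eta_t=-\ff1{t_0}\big(\Delta_0^{(2)}+\varrho_{t_0}^\nu-\varrho_{t_0}^\mu\big)+\ff{\d}{\d t}\Big[\ff{t(t_0-t)}{t_0^2}B^*\e^{-tA^*}\Big]U_{t_0}^{-1}v_{t_0},
\]
with $v_{t_0}:=-\Delta_0^{(1)}-\int_0^{t_0}\e^{-sA}B\big[(1-\tfrac{s}{t_0})\Delta_0^{(2)}-\tfrac{s}{t_0}(\varrho_{t_0}^\nu-\varrho_{t_0}^\mu)+\varrho_s^\nu-\varrho_s^\mu\big]\d s$ deterministic given $\sF_0,\ti B^{\ti H}$; this is exactly where $U_{t_0}$ and the bound $\|U_{t_0}^{-1}\|\le1/\ell(t_0)$ from \eqref{1DeTh(Har)} enter, and by construction $Y_{t_0}=X_{t_0}^\mu$.

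Next, as in \eqref{4PfTh(Har)}–\eqref{6PfTh(Har)}, I would rewrite the $Y^{(2)}$-equation as $\d Y_t^{(2)}=b_t(Y_t,P_t^*\nu)\d t+\si_t\d\bar B_t^{H}+\ti\si_t(P_t^*\nu)\d\ti B_t^{\ti H}$ with $\bar B_t^{H}=B_t^{H}-\int_0^t\si^{-1}_s\zeta_s\d s$ and $\zeta_s:=b_s(Y_s,P_s^*\nu)-b_s(X_s^\mu,P_s^*\mu)-\eta_s$; both components of $Y$ then solve the same frozen-measure system as $X^\nu$, after the obvious subtraction of the deterministic $\varrho^\nu$ from the second component. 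Using the invertibility of $\si$ (condition \textsc{\textbf{(H2)}}(ii) when $H\in(1/2,1)$, $\si$ constant when $H\in(0,1/2)$), the fractional Girsanov theorem (\cite[Theorem~4.9]{DU98}, \cite[Theorem~2]{NO02}), and the exponential integrability of $\ff12\int_0^{t_0}|K_H^{-1}(\int_0^\cdot\si^{-1}_r\zeta_r\d r)(s)|^2\d s$ under $\P^{\ti H,0}$ (a Fernique-type argument as in Remark~\ref{Re(GirsT)}(i)), one obtains, with $R^{\ti H,0}$ the associated density, that $\bar B^H$ is a fractional Brownian motion under $R^{\ti H,0}\d\P^{\ti H,0}$ and, by weak uniqueness of the frozen-measure system as in the proof of Theorem~\ref{Th(Har)}, that $(P_{t_0}^{\ti H,0}f)(X_0^\nu)=\E_{R^{\ti H,0}\P^{\ti H,0}}f(Y_{t_0})=\E_{R^{\ti H,0}\P^{\ti H,0}}f(X_{t_0}^\mu)$. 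The Young inequality then gives, exactly as in \eqref{6PfTh(Har)}–\eqref{7PfTh(Har)},
\[
(P_{t_0}\log f)(\nu)\le\log(P_{t_0}f)(\mu)+\ff12\,\E\left[\E^{\ti H,0}\left(\int_0^{t_0}\left|K_H^{-1}\left(\int_0^\cdot\si^{-1}_r\zeta_r\d r\right)(s)\right|^2\d s\right)\right].
\]

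The core of the proof, and the main obstacle, is the quantitative bound on the Cameron–Martin norm above, i.e.\ the degenerate analogue of Lemma~\ref{GirsT}. Compared with Lemma~\ref{GirsT}, the control $\eta_s$ now carries extra terms built from $\e^{-sA}$, $B$, $B^*$ and $U_{t_0}^{-1}$; since $t\mapsto\ff{t(t_0-t)}{t_0^2}B^*\e^{-tA^*}$ and its derivative are bounded on $[0,t_0]$ and $|v_{t_0}|\le C_{A,B,T}\,\psi(X_0,\varrho)$ by $\|U_{t_0}^{-1}\|\le1/\ell(t_0)$, one first derives the pointwise estimate $|\zeta_s|\le C_{A,B,T,\kappa}\big(1+\ff1{t_0}+\ff1{\ell(t_0)}+\ff1{t_0\ell(t_0)}\big)\big(\psi(X_0,\varrho)+\W_p(\mu,\nu)\big)$, using \textsc{\textbf{(H1)}} and Theorem~\ref{WP}. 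When $H\in(0,1/2)$, \eqref{-HOF} and \eqref{FrIn} reduce $K_H^{-1}(\int_0^\cdot\si^{-1}_r\zeta_r\d r)(s)$ to a fractional integral of $\zeta$, so this pointwise bound together with Remark~\ref{Re(Est)} and $\E|X_0^\mu-X_0^\nu|^p=\W_p(\mu,\nu)^p$ directly yields $\chi(H)$. When $H\in(1/2,1)$ one must, in addition, control the increments $|\zeta_s-\zeta_r|$, which besides the terms already treated in Lemma~\ref{GirsT} now require the Hölder-in-$s$ continuity of $s\mapsto\ff{s(t_0-s)}{t_0^2}B^*\e^{-sA^*}U_{t_0}^{-1}v_{t_0}$ and of $X^\mu$, $\varrho^\mu$, $\varrho^\nu$ (from Theorem~\ref{WP} and Remark~\ref{Re(Est)}); then the decomposition $I_1+I_2+I_3+I_4$ of \eqref{1PfGirsT} applies verbatim, and integrating $\sum_i|I_i(s)|^2$ over $[0,t_0]$ produces, on top of the terms in Lemma~\ref{GirsT}, the extra contributions $\ff1{\ell^2(t_0)}$ and the cross term $\ff1{t_0^{2H}\ell^2(t_0)}$ — the latter being the dominant new term, arising from $\int_0^{t_0}s^{1-2H}|\zeta_s|^2\d s\lesssim t_0^{2-2H}\cdot\ff1{t_0^2\ell^2(t_0)}\,\psi(X_0,\varrho)^2$. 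Finally, taking $\E$, invoking Remark~\ref{Re(GirsT)}(ii) with $\psi$ in its present form (so that $\E\psi^q(X_0,\varrho)\le C_{T,\kappa,\ti H}\W_p(\mu,\nu)^q$ for $1\le q\le p$), and absorbing all positive powers of $t_0$ into the constant, one arrives at the stated $\chi(H)$, which completes the proof.
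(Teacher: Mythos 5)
Your proposal is correct and follows essentially the same route as the paper: the same coupling in which only the drift of the second component is modified, the same control function (your $\eta_t$ is exactly the derivative $g'(t)$ of the paper's $g$ in \eqref{6PfDT(Ha)}, with $U_{t_0}^{-1}v_{t_0}$ matching the paper's $-U_{t_0}^{-1}\bigl(Z_0^{(1)}+\int_0^{t_0}\e^{-sA}B[\cdots]\d s\bigr)$), the same Girsanov change of measure conditional on $(\ti B^{\ti H},\sF_0)$, the same shift by $(0,\varrho^\nu)$ plus weak uniqueness, and the same adaptation of Lemma \ref{GirsT} yielding the extra factors $1/\ell^2(t_0)$ and $1/(t_0^{2H}\ell^2(t_0))$. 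No substantive differences to report.
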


\begin{proof}

For any $\mu,\nu\in\sP_p(\R^d)$, let $X_0^\mu$ and $X_0^\nu$ be $\sF_0$-measurable satisfying $\sL_{X_0^\mu}=\mu,\sL_{X_0^\nu}=\nu$ and
\beg{align}\label{1PfDT(Ha)}
\E|X_0^\mu-X_0^\nu|^p=\W_p(\mu,\nu)^p,
\end{align}
and let $X_t^\mu$ and $X_t^\nu$ solve respectively \eqref{(Deg)eq} with  $\sL_{X_0^\mu}=\mu$ and $\sL_{X_0^\nu}=\nu$, which  implies $\sL_{X_t^\mu}=P_t^*\mu$ and  $\sL_{X_t^\nu}=P_t^*\nu$.

Fix $t_0\in(0,T]$. We first introduce the following coupling DDSDE: for $t\in[0,t_0]$,
\begin{equation}\label{2PfDT(Ha)}
\begin{cases}
 \textnormal\d Y^{(1)}_t=(AY^{(1)}_t+B Y^{(2)}_t)\d t,\\
 \textnormal\d Y^{(2)}_t=(b_t(X_t,P_t^*\mu)+g'(t))\d t+\si_t\d B^{H}_t+\ti\si_t(P_t^*\nu)\d\ti B^{\ti H}_t,
\end{cases}
\end{equation}
with $Y_0=X_0^\nu$,
where the differentiable function $g:[0,t_0]\ra\R$ will be determinated below. \\
Combining \eqref{(Deg)eq} with \eqref{2PfDT(Ha)} yields that for each $t\in[0,t_0]$,
\begin{equation}\label{5PfDT(Ha)}
\begin{cases}
Y^{(1)}_t-X^{\mu,(1)}_t=\e^{tA}Z_0^{(1)}+\int_0^t\e^{(t-s)A}B(Z_0^{(2)}+g(s)-g(0)+\varrho_s^\nu-\varrho_s^\mu)\d s,\\
Y^{(2)}_t-X^{\mu,(2)}_t=Z_0^{(2)}+g(t)-g(0)+\varrho_t^\nu-\varrho_t^\mu,
\end{cases}
\end{equation}
where $Z_0=(Z_0^{(1)},Z_0^{(2)}):=Y_0-X^\mu_0=(Y^{(1)}_0-X^{\mu,(1)}_0,Y^{(2)}_0-X^{\mu,(2)}_0)$.\\
To construct a coupling $(X^\mu_t,Y_t)$ by change of measure for them such that $X^\mu_{t_0}=Y_{t_0}$, we take $g$ as follows:
\beg{align}\label{6PfDT(Ha)}
g(t)&=-\ff t {t_0}(Z_0^{(2)}+\varrho_{t_0}^\nu-\varrho_{t_0}^\mu)-\ff{t(t_0-t)}{t_0^2}B^*\e^{-tA^*}U_{t_0}^{-1}Z_0^{(1)}\cr
&-\ff{t(t_0-t)}{t_0^2}B^*\e^{-tA^*}U_{t_0}^{-1}\int_0^{t_0}\e^{-sA}B\left[\ff{t_0-s}{t_0}Z_0^{(2)}-\ff s {t_0}(\varrho_{t_0}^\nu-\varrho_{t_0}^\mu)+\varrho_s^\nu-\varrho_s^\mu\right]\d s.
\end{align}

Next, we rewrite \eqref{2PfDT(Ha)} as
\begin{equation}\label{3PfDT(Ha)}
\begin{cases}
 \textnormal\d Y^{(1)}_t=(AY^{(1)}_t+B Y^{(2)}_t)\d t,\\
 \textnormal\d Y^{(2)}_t=b_t(Y_t,P_t^*\nu)\d t+\si_t\d\hat{B}^{H}_t+\ti\si_t(P_t^*\nu)\d\ti B^{\ti H}_t,\ \ t\in[0,t_0],
\end{cases}
\end{equation}
where
\beg{align*}
\hat{B}^{H}_t:=B^{H}_t-\int_0^t\si^{-1}_s\hat{\zeta}_s\d s=\int_0^tK_H(t,s)\left(\d W_s-K_H^{-1}\left(\int_0^\cdot\si^{-1}_r\hat{\zeta}_r\d r\right)(s)\d s\right)
\end{align*}
with
\beg{align*}
\hat{\zeta}_s=b_s(Y_s,P_s^*\nu)-b_s(X_s,P_s^*\mu)-g'(s).
\end{align*}
Put
\beg{align*}
\hat{R}^{\ti H,0}:=\exp\left[\int_0^{t_0}\left\langle K_H^{-1}\left(\int_0^\cdot\si^{-1}_r\hat{\zeta}_r\d r\right)(s),\d W_s\right\rangle-\ff 1 2\int_0^{t_0}\left|K_H^{-1}\left(\int_0^\cdot\si^{-1}_r\hat{\zeta}_r\d r\right)(s)\right|^2\d s\right].
\end{align*}
By a direct calculation, we can have
\beg{align*}
|Y_t-X_t^\mu|\leq\ff {C_{T,\kappa}} {\ell(t_0)}\Big(|Z_0|+\sup_{s\in[0,t_0]}|\varrho^\mu_s-\varrho^\nu_s|\Big)
\end{align*}
and
\beg{align*}
|\hat{\zeta}_t|\leq C_{T,\kappa,\ti H}\left[\W_p(\mu,\nu)+\left(\ff 1{t_0}+\ff 1 {\ell(t_0)}+\ff 1 {t_0\ell(t_0)}\right)\Big(|Z_0|+\sup_{s\in[0,t_0]}|\varrho^\mu_s-\varrho^\nu_s|\Big)\right].
\end{align*}
Then, in the sprit of the proofs of Lemma \ref{GirsT} and Remark \ref{Re(GirsT)},
we conclude that $(\hat{B}^{H}_t)_{t\in[0,t_0]}$ is a $l$-dimensional fractional Brownian motion under the conditional probability $\hat{R}^{\ti H,0}\d\P^{\tilde{H},0}$.
and there holds
\beg{align*}
\E\left(\int_0^{t_0}\left|K_H^{-1}\left(\int_0^\cdot\si^{-1}_r\hat{\zeta}_r\d r\right)(s)\right|^2\d s\right)
\leq\chi(H)
\end{align*}
with
\begin{equation}\nonumber
\chi(H)=
\left\{
\begin{array}{ll}\vspace{0.3cm}
 C_{T,\kappa,\ti\kappa,H,\ti H}\left(1+\W_p(\mu,\nu)^{2\be}+\ff 1 {t_0^{2H}}+\ff 1 {\ell^2(t_0)}+\ff 1{t_0^{2H}\ell^2(t_0)}\right)\W_p(\mu,\nu)^2,  \ H\in(1/2,1),\\
 C_{T,\kappa,H,\ti H}\left(1+\ff 1{t_0^{2H}}+\ff 1 {\ell^2(t_0)}+\ff 1{t_0^{2H}\ell^2(t_0)}\right)\W_p(\mu,\nu)^2,\ \ \ \ \ \ \ \ \ \ \ \ \ \ \ \ \ \ \ \ H\in(0,1/2).
\end{array} \right.
\end{equation}

Now, let $\hat{Y}_t=Y_t-(0,\varrho_t^\nu)$ and then it is easy to see that $\hat{Y}_\cdot$ satisfies
\begin{equation}\label{4PfDT(Ha)}
\begin{cases}
 \textnormal\d \hat{Y}^{(1)}_t=(A\hat{Y}^{(1)}_t+B \hat{Y}^{(2)}_t+B\varrho_t^\nu)\d t,\\
 \textnormal\d \hat{Y}^{(2)}_t=b_t(\hat{Y}_t+(0,\varrho_t^\nu),P_t^*\nu)\d t+\si_t\d\hat{B}^{H}_t,\ \ t\in[0,t_0],\ \hat{Y}_0=Y_0.
\end{cases}
\end{equation}
Observe that $\hat{X}=X^\nu-(0,\varrho^\nu)$ solves SDE of the same form as \eqref{4PfDT(Ha)} with $\hat{B}^H$ replaced by $B^H$.
So, along the same lines as in \eqref{5PfTh(Har)}, \eqref{6PfTh(Har)} and \eqref{7PfTh(Har)}, we get the desired assertion.
\end{proof}

\subsubsection{Bismut formula}

In this part, we aim to establish the Bismut formula for the $L$-derivative of \eqref{(Deg)eq}.
For every $\mu\in\sP_p(\R^{m+l})$, let $X_0^\mu$ be $\sF_0$-measurable satisfying $\sL_{X_0^\mu}=\mu$,
and let $(X_t^\mu)_{t\in[0,T]}$ be the solution to \eqref{(Deg)eq} with initial value $X_0^\mu$.
For any $\ve\in[0,1]$ and $\phi\in L^p(\R^{m+l}\ra\R^{m+l},\mu)$, denote $X_t^{\mu_{\ve,\phi}}$ by the solution of \eqref{(Deg)eq} with $X_0^{\mu_{\varepsilon,\phi}}=(\mathrm{Id}+\varepsilon\phi)(X_0^\mu)$
and denote $P^*_t\mu_{\varepsilon,\phi}=\sL_{X_t^{\mu_{\varepsilon,\phi}}}$ for every $t\in[0,T]$.
We set for each $0\leq s<t\leq T$,
\beg{align*}
\hbar_{s,t}:=\left(\e^{sA}\phi^{(1)}(X_0^\mu)+\int_0^s\e^{(s-r)A}B\left(\phi^{(2)}(X_0^\mu)+\Xi_{t}(r)+\Lambda_r\right)\d r, \phi^{(2)}(X_0^\mu)+\Xi_{t}(s)+\Lambda_s\right),
\end{align*}
where
\beg{align*}
\Xi_{t}(s):=&-\ff s {t_0}(\phi^{(2)}(X_0^\mu)+\Lambda_{t})-\ff{s(t-s)}{t^2}B^*\e^{-sA^*}U_{t}^{-1}\phi^{(1)}(X_0^\mu)\cr
&-\ff{s(t-s)}{t^2}B^*\e^{-sA^*}U_{t}^{-1}\int_0^{t_0}\e^{-rA}B\left[\ff{t-r}{t_0}\phi^{(2)}(X_0^\mu)-\ff r {t_0}\Lambda_{t}+\Lambda_r\right]\d r.
\end{align*}

\begin{thm}\label{Th(BD)}
Consider Eq. \eqref{(Deg)eq}. Assume \eqref{1DeTh(Har)} and
if one of the two following assumptions holds:
\beg{enumerate}
\item[(I)] $H\in(1/2,1)$, $b,\si,\ti\si$ satisfy \textsc{\textbf{(H1')}}, \textsc{\textbf{(H2)}} and \textsc{\textbf{(H3)}};
\item[(II)] $H\in(0,1/2), b,\ti\si$ satisfies \textsc{\textbf{(H1')}}, \textsc{\textbf{(H3)}} with $d=m+l$, $\si_t$ does not depend on $t$,
\end{enumerate}
then for any $t\in(0,T],f\in\sB_b(\R^{m+l}),\phi\in L^p(\R^{m+l}\ra\R^{m+l},\mu)$ and $\mu\in\sP_p(\R^{m+l})$ with $p\geq 2(1+\be)$ if $H\in(1/2,1)$ or $p\geq 2$ if $H\in(0,1/2)$, $D^L_\phi(P_T f)(\mu)$ exists and satisfies
\beg{align*}
D^L_\phi(P_{t}f)(\mu)
=\E\left(f(X_{t}^\mu)\int_0^{t}\left\langle K_H^{-1}\left(\int_0^\cdot\si^{-1}_r\Theta_{r,t}\d r\right)(s),\d W_s\right\rangle\right),
\end{align*}
where $\Theta_{\cdot,\cdot}$ is defined as
\beg{align*}
\Theta_{s,t}=\na b_s(\cdot,P_s^*\mu)(X_s^\mu)\hbar_{s,t}+\E[\langle D^Lb_s(x,\cdot)(P_s^*\mu)(X_s^\mu),\nabla_\phi X_s^\mu\rangle]|_{x=X_s^\mu}
-(\Xi_{t})'(s).
\end{align*}
\end{thm}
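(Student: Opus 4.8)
The plan is to run the argument of Theorem~\ref{Th(NBD)}, but with the non-degenerate coupling replaced by the degenerate coupling constructed in the proof of Theorem~\ref{DeTh(Har)}. Fix $t_0\in(0,T]$. For $\ve\in(0,1]$ let $Y^\ve=(Y^{\ve,(1)},Y^{\ve,(2)})$ solve the coupling system \eqref{2PfDT(Ha)} with $\nu=\mu_{\ve,\phi}$, initial value $Y_0^\ve=(\mathrm{Id}+\ve\phi)(X_0^\mu)$, and with $g=g_\ve$ given by \eqref{6PfDT(Ha)} for $Z_0=Y_0^\ve-X_0^\mu=\ve\phi(X_0^\mu)$ and $\varrho^\nu=\varrho^{\mu_{\ve,\phi}}$; then $g_\ve(0)=0$ and, by \eqref{5PfDT(Ha)}, $Y^\ve_{t_0}=X^\mu_{t_0}$. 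Rewriting \eqref{2PfDT(Ha)} in the Girsanov form \eqref{3PfDT(Ha)} with
\beg{align*}
\hat\zeta^\ve_s:=b_s(Y^\ve_s,P_s^*\mu_{\ve,\phi})-b_s(X^\mu_s,P_s^*\mu)-g_\ve'(s),
\end{align*}
the fractional Girsanov theorem — applied exactly as in Theorem~\ref{DeTh(Har)}, using the bounds on $|Y^\ve_s-X^\mu_s|$ and $|\hat\zeta^\ve_s|$ recorded there — shows that the corresponding $\hat B^H$ is an $l$-dimensional fractional Brownian motion under $\hat R^{\ti H,0}_\ve\,\d\P^{\ti H,0}$, whence $(P_{t_0}^{\ti H,0}f)(X_0^{\mu_{\ve,\phi}})=\E^{\ti H,0}(\hat R^{\ti H,0}_\ve f(X^\mu_{t_0}))$ in complete analogy with \eqref{5PfTh(Har)}.

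I would then differentiate this identity in $\ve$ at $\ve=0$. As in \eqref{4PfTh(NBD)}, the quadratic-variation part of $(\hat R^{\ti H,0}_\ve-1)/\ve$ contributes nothing in the limit, since $\W_p(\mu,\mu_{\ve,\phi})\le\ve\|\phi\|_{L^p(\mu)}$ and the entropy-cost bound of Theorem~\ref{DeTh(Har)} (equivalently Remark~\ref{ReTh(Har)}) controls it by $\chi(H)$; hence
\beg{align*}
\lim_{\ve\downarrow0}\ff{(P_{t_0}^{\ti H,0}f)(X_0^{\mu_{\ve,\phi}})-(P_{t_0}^{\ti H,0}f)(X_0^\mu)}{\ve}
=\lim_{\ve\downarrow0}\E^{\ti H,0}\left(f(X^\mu_{t_0})\,\ff1\ve\int_0^{t_0}\left\langle K_H^{-1}\Big(\int_0^\cdot\si^{-1}_r\hat\zeta^\ve_r\,\d r\Big)(s),\d W_s\right\rangle\right).
\end{align*}
To pass the limit inside the stochastic integral I would first identify $\lim_{\ve\downarrow0}\hat\zeta^\ve_s/\ve$. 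Dividing \eqref{6PfDT(Ha)} by $\ve$ with $Z_0=\ve\phi(X_0^\mu)$ and invoking Lemma~\ref{PD-GT}(ii) in the form $\varrho^{\mu_{\ve,\phi}}_s-\varrho^\mu_s=\ve\Lambda_s+o(\ve)$ in $L^p$ gives $g_\ve(s)/\ve\to\Xi_{t_0}(s)$ and $g_\ve'(s)/\ve\to(\Xi_{t_0})'(s)$ uniformly; combined with \eqref{5PfDT(Ha)} this yields $(Y^\ve_s-X^\mu_s)/\ve\to\hbar_{s,t_0}$. Together with \textbf{(H1')}, Lemma~\ref{FoLD}, and the existence of $\na_\phi X^\mu$ (the degenerate analogue of Lemma~\ref{PD-GT}(i)), this produces
\beg{align*}
\lim_{\ve\downarrow0}\ff{\hat\zeta^\ve_s}{\ve}
=\na b_s(\cdot,P_s^*\mu)(X^\mu_s)\hbar_{s,t_0}+\E[\langle D^Lb_s(x,\cdot)(P_s^*\mu)(X^\mu_s),\na_\phi X^\mu_s\rangle]\big|_{x=X^\mu_s}-(\Xi_{t_0})'(s)=\Theta_{s,t_0},
\end{align*}
and likewise $(\hat\zeta^\ve_s-\hat\zeta^\ve_r)/\ve\to\Theta_{s,t_0}-\Theta_{r,t_0}$.

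Applying the dominated convergence theorem as in Theorem~\ref{Th(NBD)} — splitting $K_H^{-1}(\int_0^\cdot\si^{-1}_r\hat\zeta^\ve_r\,\d r)$ via \eqref{InOp}, \eqref{-HOF} into the four pieces of \eqref{2PfTh(NBD)} (one piece when $H<1/2$), using the $\ve$-uniform moment bounds of Lemma~\ref{PD-GT} and the uniform integrability of Remark~\ref{Re(GirsT)}(i) — gives
\beg{align*}
\lim_{\ve\downarrow0}\ff{(P_{t_0}^{\ti H,0}f)(X_0^{\mu_{\ve,\phi}})-(P_{t_0}^{\ti H,0}f)(X_0^\mu)}{\ve}
=\E^{\ti H,0}\left(f(X^\mu_{t_0})\int_0^{t_0}\left\langle K_H^{-1}\Big(\int_0^\cdot\si^{-1}_r\Theta_{r,t_0}\,\d r\Big)(s),\d W_s\right\rangle\right).
\end{align*}
Finally, since $(P_{t_0}f)(\mu)=\E[(P_{t_0}^{\ti H,0}f)(X_0^\mu)]$ and $(P_{t_0}f)(\mu_{\ve,\phi})=\E[(P_{t_0}^{\ti H,0}f)((\mathrm{Id}+\ve\phi)(X_0^\mu))]$, the Pinsker inequality together with the expression of $\chi(H)$ bounds the difference quotients $\ve^{-1}[(P_{t_0}^{\ti H,0}f)(X_0^{\mu_{\ve,\phi}})-(P_{t_0}^{\ti H,0}f)(X_0^\mu)]$ in $L^2(\P)$ by $\|f\|_\infty$ times an integrable random variable (now carrying the extra factors $1/\ell(t_0)$ and $1/(t_0^H\ell(t_0))$), exactly as in \eqref{7PfTh(NBD)}; one further dominated convergence and Lemma~\ref{FoLD} applied to $\mu\mapsto(P_{t_0}f)(\mu)$ then identify $D^L_\phi(P_{t_0}f)(\mu)$ with the claimed expectation, for both ranges of $H$ (under \textbf{(H2)} when $H\in(1/2,1)$).

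The step I expect to be the main obstacle is the justification of these two dominated-convergence passages: one must produce $\ve$-uniform $L^p$-estimates for $\hat\zeta^\ve$ and, when $H\in(1/2,1)$, for its H\"older increments $\hat\zeta^\ve_s-\hat\zeta^\ve_r$ — using \textbf{(H2)}(i) together with the time-increment moment bounds $\E|X^\mu_s-X^\mu_r|^p\le C|s-r|^{p(H\wedge\ti H)}$ and $\E|(X^\mu_s-X^\mu_r)-(X^{\mu_{\ve,\phi}}_s-X^{\mu_{\ve,\phi}}_r)|^p\le C(s-r)^{p\ti H}\W_p(\mu,\mu_{\ve,\phi})^p$ from the proof of Theorem~\ref{WP} — and then feed them through the singular kernels in \eqref{2PfTh(NBD)} while keeping the dependence on $t_0$, on $\ell(t_0)$ from \eqref{1DeTh(Har)}, and on the matrix exponentials $\e^{\pm sA}$, $\e^{\pm sA^*}$ under control. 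This is precisely the degenerate counterpart of Lemma~\ref{GirsT}, and it is the only place where the Hamiltonian structure of \eqref{(Deg)eq} genuinely enters.
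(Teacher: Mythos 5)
Your proposal is correct and follows essentially the same route as the paper: the degenerate coupling \eqref{2PfDT(Ha)}--\eqref{6PfDT(Ha)} with $\nu=\mu_{\ve,\phi}$ and $Z_0=\ve\phi(X_0^\mu)$, the identification of $\lim_{\ve\downarrow0}\hat\zeta^\ve_s/\ve=\Theta_{s,t_0}$ (and of the increments) via $g_\ve/\ve\to\Xi_{t_0}$ and $(Y^\ve_s-X^\mu_s)/\ve\to\hbar_{s,t_0}$, and then the same dominated-convergence and Pinsker/entropy-cost arguments as in Theorem \ref{Th(NBD)}, transplanted to the Hamiltonian setting. The paper's own proof is precisely this, stated more tersely by reference to \eqref{5PfTh(NBD)}, \eqref{8PfTh(NBD)}, \eqref{6PfTh(NBD)}--\eqref{11PfTh(NBD)}, and your closing remark about the degenerate analogue of Lemma \ref{GirsT} is exactly the estimate the paper invokes implicitly from the proof of Theorem \ref{DeTh(Har)}.
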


\begin{proof}
Let $t_0\in(0,T]$ be fixed.
For $\ve\in(0,1]$, let $Y^\ve$ solve \eqref{2PfDT(Ha)} with $\nu=\mu_{\varepsilon,\phi}$ and $Y_0=Y_0^\ve=(\mathrm{Id}+\varepsilon\phi)(X_0^\mu)$.
Then, \eqref{5PfDT(Ha)} becomes
\begin{equation*}
\begin{cases}
Y^{\ve,(1)}_t-X^{\mu,(1)}_t=\ve\e^{tA}\phi^{(1)}(X_0^\mu)+\int_0^t\e^{(t-s)A}B(\ve\phi^{(2)}(X_0^\mu)+g(s)+\varrho_s^{\mu_{\varepsilon,\phi}}-\varrho_s^\mu)\d s,\\
Y^{\ve,(2)}_t-X^{\mu,(2)}_t=\ve\phi^{(2)}(X_0^\mu)+g(t)+\varrho_t^{\mu_{\varepsilon,\phi}}-\varrho_t^\mu.
\end{cases}
\end{equation*}
Here we recall that $g(0)=0$ due to \eqref{6PfDT(Ha)} in which $\nu$ and $(Z_0^{(1)},Z_0^{(2)})$ is replaced by $\mu_{\varepsilon,\phi}$ and $(\ve\phi^{(1)}(X_0^\mu),\ve\phi^{(2)}(X_0^\mu))$, respectively.
In particular, there holds $Y_{t_0}^\ve=X_{t_0}^\mu$.

Set
\beg{align*}
\hat{R}_\ve^{\ti H,0}:=\exp\left[\int_0^{t_0}\left\langle K_H^{-1}\left(\int_0^\cdot\si^{-1}_r\hat{\zeta^\ve_r}\d r\right)(s),\d W_s\right\rangle-\ff 1 2\int_0^{t_0}\left|K_H^{-1}\left(\int_0^\cdot\si^{-1}_r\hat{\zeta^\ve_r}\d r\right)(s)\right|^2\d s\right]
\end{align*}
with
\beg{align*}
\hat{\zeta}^\ve_s=b_s(Y_s^\ve,P_s^*{\mu_{\varepsilon,\phi}})-b_s(X_s^\mu,P_s^*\mu)-g'(s).
\end{align*}
Observe that as in \eqref{5PfTh(NBD)} and  \eqref{8PfTh(NBD)}, we obtain that for each $r,s\in[0,t_0]$,
\beg{align*}
\lim\limits_{\ve\downarrow0}\ff{\hat{\zeta}^\ve_s}\ve
=&\na b_s(\cdot,P_s^*\mu)(X_s^\mu)\hbar_{s,t_0}+\E[\langle D^Lb_s(x,\cdot)(P_s^*\mu)(X_s^\mu),\nabla_\phi X_s^\mu\rangle]|_{x=X_s^\mu}\cr
&-(\Xi_{t_0})'(s)=:\Theta_{s,t_0}
\end{align*}
and
\beg{align*}
\lim\limits_{\ve\downarrow0}\ff{\hat{\zeta}^\ve_s-\hat{\zeta}^\ve_r}\ve=&\na b_s(\cdot,P_s^*\mu)(X_s^\mu)\hbar_{s,t_0}-\na b_r(\cdot,P_r^*\mu)(X_r^\mu)\hbar_{r,t_0}\cr
&+\E[\langle D^Lb_s(x,\cdot)(P_s^*\mu)(X_s^\mu),\nabla_\phi X_s^\mu\rangle]|_{x=X_s^\mu}\cr
&-\E[\langle D^Lb_r(y,\cdot)(P_r^*\mu)(X_r^\mu),\nabla_\phi X_r^\mu\rangle]|_{y=X_r^\mu}\cr
&-\left[(\Xi_{t_0})'(s)-(\Xi_{t_0})'(r)\right]\cr
=&\Theta_{s,t_0}-\Theta_{r,t_0}.
\end{align*}
Then, resorting to the same techniques as in  \eqref{6PfTh(NBD)} and \eqref{9PfTh(NBD)} as well as \eqref{10PfTh(NBD)} and \eqref{11PfTh(NBD)}, we derive that for each $H\in(1/2,1)\cup(0,1/2)$,
\beg{align*}
D^L_\phi(P_{t_0}f)(\mu)
=\E\left(f(X_{t_0}^\mu)\int_0^{t_0}\left\langle K_H^{-1}\left(\int_0^\cdot\si^{-1}_r\Theta_{r,t_0}\d r\right)(s),\d W_s\right\rangle\right).
\end{align*}
\end{proof}

We conclude this part with a remark.

\begin{rem}\label{Re(DeTh)}
Similar to Remarks \ref{ReTh(Har)} and \ref{ReTh(NBD)}(ii), it follows from Theorems \ref{DeTh(Har)} and \ref{Th(BD)} that
the following entropy-cost and intrinsic derivative estimates
\beg{align*}
\mathrm{Ent}(P_t^*\nu|P_t^*\mu)\leq\chi(H)
\end{align*}
and
\beg{align*}
\|D^L(P_t f)(\mu)\|_{L^{p^*}_\mu}\le C_{T,\kappa,\ti\kappa,H,\ti H}\left(1+\ff 1 {t^H}+\ff 1 {\ell(t)}+\ff 1{t^{H}\ell(t)}\right)\left((P_t|f|^{p^*})(\mu)\right)^{\ff 1 {p^*}}
\end{align*}
hold for any $t\in(0,T], \mu,\nu\in\sP_p(\R^d)$ and $f\in\sB_b(\R^d)$,
where $C_{T,\kappa,\ti\kappa,H,\ti H}$ is a positive constant which is independent of $\ti\kappa$ when $H\in(0,1/2)$,
and $p\geq 2(1+\be)$ if $H\in(1/2,1)$ or $p\geq 2$ if $H\in(0,1/2)$.
In addititon, to guarantee \eqref{1DeTh(Har)} holds, one needs to impose some non-degeneracy condition on the matrix $B$.
For instance,  assume the following Kalman rank condition:
\beg{align}\label{1Re(DeTh}
\mathrm{Rank}[B,AB,\cdots,A^kB]=m
\end{align}
holds for some integer number $k\in[0,m-1]$ (in particular, if $k=0$, \eqref{1Re(DeTh} reduces to $\mathrm{Rank}[B]=m$),
then  \eqref{1DeTh(Har)} is satisfied with $\ell(t)=C(t\wedge1)^{2k+1}$
for positive constant $C$ (see, e.g., \cite[Proof of Theorem 4.2]{WZ13}).
\end{rem}

\end{document}